\newcommand{\vertiii}[1]{{\left\vert\kern-0.25ex\left\vert\kern-0.25ex\left\vert #1 
    \right\vert\kern-0.25ex\right\vert\kern-0.25ex\right\vert}}
\newtheorem{theorem}{Theorem}[section]
\newtheorem{proposition}[theorem]{Proposition}
\newtheorem{lemma}[theorem]{Lemma}
\newtheorem{remark}[theorem]{Remark}
\theoremstyle{definition}
\DeclareMathOperator{\logg}{logg}
\def\1{{{\mathit 1} \!\!\>\!\! I} }
\title[Diffusion coefficient of intermittent maps]{Differentiability of the diffusion coefficient for a family of intermittent maps}
\author{Fanni M. Sélley}
\address{Leiden University Mathematical Institute, Niels Bohrweg 1 2333 CA Leiden}
\keywords{Intermittent maps, central limit theorem, diffusion coefficient, linear response}
\email{f.m.selley@math.leidenuniv.nl}
\thanks{\emph{ORCID of the author}: \url{https://orcid.org/my-orcid?orcid=0000-0002-8694-6715}}
\begin{document}

\maketitle

\begin{abstract}
It is well known that the Liverani--Saussol--Vaienti map satisfies a central limit theorem for Hölder observables in the parameter regime where the correlations are summable. We show that when $C^2$ observables are considered, the variance of the limiting normal distribution is a $C^1$ function of the parameter. We first show this for the first return map to the base of the second branch by studying the Green-Kubo formula, then conclude the result for the original map using Kac's lemma and relying on linear response.
\end{abstract}

\section{Introduction}

We consider a one-parameter family $\{f_{\alpha}\}_{\alpha \in \mathcal{A}}$ of Pomeau--Manneville type maps introduced by Liverani, Saussol and Vaienti \cite{LSV99}. Provided that $\alpha \in [0,1)$, each map preserves a unique probability measure $\nu_{\alpha}$ and exhibits polynomial decay of correlations for H\"older observables \cite{LSV99,Y99}. Restricting $\alpha$ further to $[0,1/2)$, the central limit theorem also holds: ergodic sums of centered observables, normalized by $\sqrt{n}$, converge in distribution to a Gaussian random variable with mean zero and variance $\sigma^2(\alpha)$ \cite{Y99,G15}. In this paper we study the smoothness of the mapping $\alpha \mapsto \sigma^2(\alpha)$. 

The question is motivated by the deterministic diffusion on the real line defined by \cite{KK07}. Diffusion can be characterized by the linear growth rate of the mean square displacement of an ensemble of moving particles, called the \emph{diffusion coefficient}. A short calculation shows that the said system on the real line is conjugated to a Pomeau--Manneville type map of the interval, and the corresponding quantity of the diffusion coefficient is $\sigma^2(\alpha)$ associated to a particular observable. So knowing the regularity of $\sigma^2(\alpha)$ gives information about the parameter dependence of the diffusion itself. 

Continuity properties of the diffusion coefficient associated to different types of dynamics were studied extensively, for an overview see the book \cite{K07} and references therein. In \cite{FG82} a diffusion on the real line was defined where trajectories for most time are localized to boxes $B_N = [N,N+1) \times [N,N+1)$, but at certain instances they can move to either $B_{N-1}$ or $B_{N+1}$. The analytic expression of the map is such that the system on the real line is conjugated to an interval map, which aides calculations a great deal. Focusing on this simple setting, the piecewise linear case was studied in \cite{KD95,KK03,KKH08} and low regularity was reported both from the analytic and geometric measure theory perspective. In \cite{KKH08} the setting was generalized to a family of maps exhibiting exponential decay of correlations with uniform parameters, and log-Lipschitz continuity of the diffusion coefficient was proved with methods relying mainly on the transfer operator approach. The paper \cite{KK07}, providing our motivation, proposed to consider a family of intermittent maps and study the so-called generalized diffusion coefficient in the parameter regime of anomalous diffusion. In the regime of normal diffusion \cite{BCV16} proved the continuity of the diffusion coefficient for a wide variety of ($f_{\alpha}$-independent) potentials. In fact, for the zero potential (measure of maximal entropy) they proved that the diffusion coefficient as a function of $f_{\alpha}$ varies in a continuously differentiable way.

In this paper we consider the regime of normal diffusion, i.e. $\alpha \in [0,1/2)$ (and the SRB measure $\nu_{\alpha}$). Then the variance $\sigma^2(\alpha)$ can be given as the sum of correlations by the Green--Kubo formula, so the smoothness of correlations gives a good guess for the smoothness of $\sigma^2(\alpha)$. The first (self-correlation) term in the formula is $\int \psi^2 d\nu_{\alpha}$. The smooth differentiability of the mapping $\alpha \mapsto \int \psi^2 d\nu_{\alpha}$ is the question of linear response, which is well understood for the family of maps in question \cite{BT16,K16}. The higher order terms $\int \psi \cdot \psi \circ T_{\alpha}^k \:d\nu_{\alpha}$ have an additional $\alpha$-dependence in the integrand $\psi \cdot \psi \circ T_{\alpha}^k$, however since $\alpha \mapsto T_{\alpha}^k$ is smooth, the smoothness of $\alpha \mapsto \int \psi \cdot \psi \circ T_{\alpha}^k \: d\nu_{\alpha}$ follows. Continuity of $\sigma^2(\alpha)$ follows from the summability of correlations, but in order to show continuous differentiability, one also has to show that the $\alpha$-derivatives of the correlations are summable.  Making these steps precise we show that smooth differentiability also holds for the higher order terms of the Green--Kubo formula, furthermore,  $\alpha \mapsto \sigma^2(\alpha) \in C^1[\alpha_-,\alpha_+]$ for any $0< \alpha_- < \alpha_+< 1/2$. 

The idea of the proof is to define the usual first return map, prove smooth differentiability for the corresponding variance $\tilde{\sigma}^2(\alpha)$ and conclude the same regularity for $\sigma^2(\alpha)$ by using Kac's formula. The calculations make use of the linear response result and technical estimates of \cite{K16}, but the issues sketched above require work that goes beyond being a mere corollary of linear response. 

We note that another important quantity characterizing diffusion is the \emph{drift coefficient} (the expectation of the normal random variable given by the central limit theorem in case of a non-centered observable $\varphi$). This translates to the integral $\int \varphi \: d\nu_{\alpha}$, hence the smooth differentiability as a function of $\alpha$ is covered by the linear response result of \cite{BT16,K16}.

The structure of this paper is as follows: in Section 2 we introduce our setting, main results and give a sketch of the proof. Section 3 contains the proof of our main theorem. Section 4 is devoted to concluding remarks on possible further directions of research. 
\vspace{0.3cm}\\
\textbf{Acknowledgements.} I would like to express my gratitude to Wael Bahsoun for providing me perspective and guidance for this work. I would also like to thank Alexey Korepanov, Julien Sedro and Dalia Terhesiu helpful discussions. 
\vspace{0.3cm}\\
\textbf{Notational remark.} Throughout these notes, we will denote the Lebesgue measure on $[0,1]$ by $m$. Furthermore $C> 0$ will denote a generic constant whose value might change from one line to the next.

\section{Setting and main result}

We consider the Liverani--Saussol--Vaienti map $f_{\alpha}: [0,1] \to [0,1]$ defined as
\begin{equation}
f_{\alpha}(x)=\begin{cases}
x(1+2^{\alpha}x^{\alpha}) &\quad \text{for} \quad 0 \leq x \leq1/2 \\
2x-1 &\quad \text{for} \quad 1/2 < x \leq 1.
\end{cases}
\end{equation}
It is well known that this map preserves a unique (Lebesgue) absolutely continuous measure $\nu_{\alpha}$, with density function $\rho_{\alpha}$ \cite{LSV99}.

The following theorem is also well known, see for example \cite[Theorem 6]{Y99} or \cite[Theorem 4.1]{G15}.
\begin{theorem} \label{thm:clt}
Let $0 \leq \alpha < 1/2$ and $\psi: [0,1] \to \mathbb{R}$ a $C^2$ function. Denote $S_n \psi=\sum_{i=0}^{n-1}\psi\circ f_{\alpha}^i$ and $\hat \psi_{\alpha}=\psi - \int \psi \rho_{\alpha} dm$. Then there exists $\sigma \geq 0$ such that $\frac{S_n \hat \psi_{\alpha}}{\sqrt{n}}$ converges in distribution to a random variable $\mathcal{N}(0,\sigma^2)$.
\end{theorem}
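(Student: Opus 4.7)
The plan is to follow the classical inducing strategy underlying both references cited. Let $Y = (1/2, 1]$, let $R_\alpha(x) = \inf\{n \geq 1 : f_\alpha^n x \in Y\}$ be the first-return time to $Y$, and let $T_\alpha = f_\alpha^{R_\alpha} : Y \to Y$ be the induced map. On the countable partition $\{Y_n = \{R_\alpha = n\}\}$, $T_\alpha$ is full-branch, uniformly expanding, and has bounded distortion (a Gibbs--Markov map), with invariant probability $\hat\nu_\alpha = \nu_\alpha(Y)^{-1}\nu_\alpha|_Y$. The quantitative input I would take from \cite{LSV99} is the tail estimate $\hat\nu_\alpha(R_\alpha > n) = O(n^{-1/\alpha})$, which for $\alpha < 1/2$ gives $R_\alpha \in L^2(\hat\nu_\alpha)$.

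Next I would define the induced observable $\Psi_\alpha(x) = \sum_{k=0}^{R_\alpha(x) - 1} \hat\psi_\alpha(f_\alpha^k x)$ on $Y$. Kac's formula yields $\int_Y \Psi_\alpha \, d\hat\nu_\alpha = 0$, and the bound $|\Psi_\alpha| \leq \|\psi\|_\infty R_\alpha$ shows $\Psi_\alpha \in L^2(\hat\nu_\alpha)$. Since $\Psi_\alpha$ is Lipschitz on each cylinder $Y_n$ with uniformly controlled constants, it lies in the standard Banach space on which the transfer operator of $T_\alpha$ has a spectral gap. The CLT for Gibbs--Markov systems, either via Nagaev's perturbation method or Gordin's martingale decomposition, then gives $N^{-1/2} \sum_{k=0}^{N-1} \Psi_\alpha \circ T_\alpha^k \Rightarrow \mathcal{N}(0, \tilde\sigma^2)$ for some $\tilde\sigma \geq 0$.

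Finally I would lift the CLT from $T_\alpha$ back to $f_\alpha$. Let $N_n(x) = \#\{0 \leq k < n : f_\alpha^k x \in Y\}$; then $S_n \hat\psi_\alpha(x) = \sum_{k=0}^{N_n(x)-1} \Psi_\alpha(T_\alpha^k x) + E_n(x)$ with boundary term $|E_n| \leq \|\psi\|_\infty\, R_\alpha \circ T_\alpha^{N_n(x)}$. Birkhoff's theorem gives $N_n/n \to \nu_\alpha(Y)$ a.e., and since $R_\alpha \in L^2(\hat\nu_\alpha)$ one has $\max_{k \leq n}(R_\alpha \circ T_\alpha^k) = o(\sqrt{n})$ in probability, so $E_n/\sqrt n \to 0$ in probability; an Anscombe-type random-index substitution then transfers the CLT to $f_\alpha$ with $\sigma^2 = \nu_\alpha(Y)\, \tilde\sigma^2$. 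The main obstacle, and the step where the threshold $\alpha < 1/2$ is sharp, is this lifting: at $\alpha = 1/2$ the return time loses its second moment, the boundary error ceases to be negligible, and the $\sqrt n$ normalization fails. The technical content of \cite[Thm.~6]{Y99} and \cite[Thm.~4.1]{G15} is the careful execution of precisely this reduction.
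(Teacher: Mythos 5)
Your proposal is correct and follows essentially the same route as the paper: the theorem itself is quoted from \cite{Y99} and \cite{G15}, but the paper's Section 3.1 reconstructs exactly your argument --- inducing on $(1/2,1]$, checking $\tau_\alpha\in L^2$ and the weighted Lipschitz condition $\sum_r \mu_\alpha(I_r)|\hat\Psi_\alpha|_{Lip(I_r)}<\infty$ for the Gibbs--Markov CLT, and then lifting via the random-index substitution of \cite[Theorem 4.8]{G15} with $\sigma^2=\nu_\alpha(Y)\tilde\sigma^2$. The only cosmetic difference is that the cylinderwise Lipschitz constants are not uniformly bounded (they grow linearly in the return time), but the weighted summability you implicitly need is exactly what the paper verifies, so your sketch is sound.
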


Actually, \cite[Theorem 6]{Y99} considers $\psi$ that is Hölder continuous, while \cite[Theorem 4.1]{G15} is stated for $C^1$ observables. We state this weaker version of both theorems for $C^2$ observables, as this is the setting that we will use.

Obviously $\sigma^2=\sigma^2(\alpha)$. The goal of these notes is to study the smoothness of the mapping $\alpha \mapsto \sigma^2(\alpha)$. Our main statement is the following:
\begin{theorem} \label{thm:main}
Let $0 < \alpha_- < \alpha_+ < 1/2$. In the setting of Theorem \ref{thm:clt},
\[
\alpha \mapsto \sigma^{2}(\alpha) \in C^1([\alpha_{-},\alpha_{+}]).
\]
\end{theorem}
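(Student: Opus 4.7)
The plan is to follow the strategy sketched in the introduction: reduce to the uniformly expanding first return map, establish $C^1$ regularity of the asymptotic variance at the induced level, and transfer back to $f_\alpha$ via Kac's formula. Let $Y=(1/2,1]$, let $\hat f_\alpha:Y\to Y$ be the first return map with return time $\tau_\alpha$, and let $\hat\nu_\alpha$ be the unique $\hat f_\alpha$-invariant probability measure on $Y$. Introduce the induced observable
\[
\Psi_\alpha(y)=\sum_{k=0}^{\tau_\alpha(y)-1}\hat\psi_\alpha(f_\alpha^k y),
\]
which is centered under $\hat\nu_\alpha$. Standard lifting of the CLT from induced to original dynamics gives
\[
\sigma^2(\alpha)=\nu_\alpha(Y)\,\tilde\sigma^2(\alpha),
\]
where $\tilde\sigma^2(\alpha)$ is the asymptotic variance of the induced system $(\hat f_\alpha,\hat\nu_\alpha,\Psi_\alpha)$. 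Since $\hat f_\alpha$ is Gibbs--Markov with exponential decay of correlations, Green--Kubo yields
\[
\tilde\sigma^2(\alpha)=\int\Psi_\alpha^2\,d\hat\nu_\alpha+2\sum_{k\ge1}\int\Psi_\alpha\cdot(\Psi_\alpha\circ\hat f_\alpha^k)\,d\hat\nu_\alpha.
\]

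The first major step is to show $\tilde\sigma^2\in C^1([\alpha_-,\alpha_+])$. Writing $C_k(\alpha)$ for the $k$-th correlation, I would first check termwise differentiability of each $C_k$: the $\alpha$-dependence sits in the induced invariant density $\hat\rho_\alpha$, in the iterates $\hat f_\alpha^k$, and in the induced observable $\Psi_\alpha$. Smoothness of $\hat\rho_\alpha$ in a suitable Banach norm follows from linear response and spectral perturbation for the uniformly expanding induced map, in the form developed in \cite{K16,BT16}; the iterates $\hat f_\alpha^k$ depend smoothly on $\alpha$ on each return-time cylinder $\{\tau_\alpha=n\}$, which is a union of finitely many intervals bounded by smoothly moving preimages of $1/2$; and on such a cylinder $\Psi_\alpha$ reduces to a finite sum of $C^2$ pieces that vary smoothly with $\alpha$. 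The genuinely delicate second step is the uniform summability
\[
\sup_{\alpha\in[\alpha_-,\alpha_+]}|C_k'(\alpha)|\le a_k,\qquad\sum_{k\ge1}a_k<\infty,
\]
which is what promotes termwise differentiability of the Green--Kubo series to $C^1$ regularity of $\tilde\sigma^2$. I would derive it by feeding differentiated versions of $\Psi_\alpha$ and $\hat\rho_\alpha$ into the uniform-in-$\alpha$ exponential decay of correlations for $\hat f_\alpha$, and by controlling tail contributions through polynomial moments of $\tau_\alpha$, which are finite to any order $p<1/\alpha$ and in particular exceed $2$ throughout $[\alpha_-,\alpha_+]\subset(0,1/2)$.

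The passage from $\tilde\sigma^2$ to $\sigma^2$ then only needs $\alpha\mapsto\nu_\alpha(Y)=\int_{1/2}^{1}\rho_\alpha\,dm$ to be $C^1$, which is an immediate consequence of the $L^1$-linear response of \cite{BT16,K16} for $f_\alpha$. The main obstacle I anticipate is the uniform summability estimate above: the derivative $\partial_\alpha\Psi_\alpha$ involves differentiating iterates $f_\alpha^k$ along orbits that approach the neutral fixed point, so naive pointwise bounds grow with $\tau_\alpha$ and may fail to sum. Closing the argument will require the quantitative resolvent and twisted transfer operator bounds of \cite{K16}, combined with a careful decomposition of $Y$ according to the level sets $\{\tau_\alpha=n\}$ that exploits the strict inequality $\alpha_+<1/2$ to secure the needed moments of $\tau_\alpha$.
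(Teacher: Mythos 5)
Your proposal follows essentially the same route as the paper: induce on $(1/2,1]$, prove termwise $C^1$ differentiability of the Green--Kubo correlations for the induced map using Korepanov's quantitative estimates, establish uniform summability of the differentiated correlations via the exponential contraction of the induced transfer operator together with a resolvent-type decomposition of $\partial_{\alpha}P_{\alpha}^k$, and transfer back through Kac's formula and linear response. You have correctly identified the crux --- the summability over $k$ of the $\alpha$-derivatives of the correlations, which the paper carries out in Proposition \ref{prop:sum} --- but your sketch defers exactly that computation, which is where the bulk of the paper's work lies.
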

In the rest of this section we give the outline of the proof. 

Let $\tau_{\alpha}(x)=\min\{n \geq 1: T_{\alpha}^k(x) \in (1/2,1]\}$ be the return time to the interval $(1/2,1]$. Define the induced map $F_{\alpha}: (1/2,1] \to (1/2,1]$ as
\begin{equation}
F_{\alpha}(x)=f_{\alpha}^{\tau_{\alpha}(x)}(x),
\end{equation}
and the induced observable
\[
\hat \Psi_{\alpha}(x)=\sum_{k=0}^{\tau_{\alpha}(x)-1}\hat \psi_{\alpha}(f_{\alpha}^k(x)).
\]
As a slight abuse of notation, we denote the ergodic sums of $\hat \Psi_{\alpha}$ under $F_{\alpha}$ also by $S_n \hat  \Psi_{\alpha}$, that is,
\[
S_n \hat  \Psi_{\alpha} = \sum^{n-1}_{i=0} \hat \Psi_{\alpha} \circ F^i_{\alpha}.
\]
Since $F_{\alpha}$ is a mixing Gibbs--Markov map (see \cite[Lemma 3.60]{A20}) with a unique invariant measure $\mu_{\alpha}$ (see \cite[Lemma 4.4.1]{A97}), and $\hat \Psi_{\alpha} \in L^2([1/2,1])$, we can conclude that the induced map also satisfies a Central Limit Theorem in the sense that $\frac{S_n \hat \Psi_{\alpha}}{\sqrt{n}}$ converges in distribution to a random variable $\mathcal{N}(0, \tilde{\sigma}^2(\alpha))$. 

Using \cite[Theorem 4.8]{G15} and Kac's lemma, we show that
\begin{equation} \label{eq:varcon}
\sigma^2(\alpha)=\frac{\tilde{\sigma}^2(\alpha)}{\int \tau_{\alpha} d \mu_{\alpha}},
\end{equation}

By \cite{K16} (see in particular the proof of \cite[Theorem 1.1]{K16}), the map $\alpha \mapsto \int \tau_{\alpha} d\mu_{\alpha}$ is continuously differentiable (and trivially bounded from below), hence it is sufficient to study the smoothness of  $\alpha \mapsto \tilde{\sigma}^2(\alpha)$. 

\begin{remark}
By Kac's lemma we also obtain that
\[
\int \hat \Psi_{\alpha} d\mu_{\alpha}=\frac{\int \hat \psi_{\alpha} d\nu_{\alpha}}{\int \tau_{\alpha}d\mu_{\alpha}}=0.
\]
\end{remark}

We express $\tilde{\sigma}^2(\alpha)$ with the Green--Kubo formula:
\begin{equation} \label{eq:gk}
\tilde{\sigma}^2(\alpha)=\int \hat \Psi_{\alpha}^2 \:d\mu_{\alpha} + 2 \sum_{k \geq 1} \int \hat \Psi_{\alpha} \cdot \hat \Psi_{\alpha} \circ F_{\alpha}^k\: d\mu_{\alpha}.
\end{equation}
We first prove (in Proposition \ref{prop:cont}) that all correlations are individually continuously differentiable functions of $\alpha$ in the sense that
\begin{equation} \label{eq:gkterms}
\begin{split}
&\alpha \mapsto \int \hat \Psi_{\alpha}^2 \:d\mu_{\alpha} \in C^1([\alpha_-,\alpha_+]) \\
&\alpha \mapsto \int \hat \Psi_{\alpha} \cdot \hat \Psi_{\alpha} \circ F_{\alpha}^k\: d\mu_{\alpha} \in C^1([\alpha_-,\alpha_+]), \: k \geq 1.
\end{split}
\end{equation} 
We then show (in Proposition \ref{prop:sum}) that the series 
\begin{equation} \label{eq:gksums}
\sum_{k \geq 1} \int \hat \Psi_{\alpha} \cdot \hat \Psi_{\alpha} \circ F_{\alpha}^k\: d\mu_{\alpha} \quad \text{and} \quad \sum_{k \geq 1} \partial_{\alpha}\left(\int \hat \Psi_{\alpha} \cdot \hat \Psi_{\alpha} \circ F_{\alpha}^k\: d\mu_{\alpha}\right)
\end{equation}
converge uniformly, from which we can conclude that $$\alpha \mapsto \tilde{\sigma}^{2}(\alpha) \in C^1([\alpha_{-},\alpha_{+}]),$$ finishing the proof of Theorem \ref{thm:main}.

In the next section we will make these steps precise.

\section{Proof}
In the first part of this section we argue the central limit theorem for the induced map and Equation \eqref{eq:varcon} giving the connection between the variances $\sigma^2$ and $\tilde{\sigma}^2$. These results are part of the general folklore, but we give an argument for the sake of completeness as exact references are hard to track down in the literature. 

In the second part we study the Green--Kubo formula \eqref{eq:gk} in depth, proving \eqref{eq:gkterms} and \eqref{eq:gksums}. The calculation makes use of the linear response result of \cite{K16}, and takes things a few steps further to obtain continuous differentiability of $\tilde{\sigma}(\alpha)$.  

\subsection{Central limit theorem for the induced map.}

Let $x_0=1$, set $x_{n+1}=f_{\alpha}^{-1}(x_n) \cap [0,1/2]$ and $y_{n+1}=f_{\alpha}^{-1}(x_n) \cap (1/2,1]$. Let  $I_n=(y_{n+1},y_n)$ and $J_n=(x_{n+1},x_n)$. Define the partition $\pi_I$ as the collection of intervals $\{I_n\}_{n \in \mathbb{N}}$, and similarly $\pi_J$ as $\{J_n\}_{n \in \mathbb{N}}$. Note that for $z \in I_n$, $\tau_{\alpha}(z)=n$ -- meaning that $\pi_I$ partitions $(1/2,1]$ according to their return time.

Let $s(x,y)$ be the separation time of $x, y \in (1/2,1]$ under $F_{\alpha}$: 
\[
s(x,y)=\inf\{n \in \mathbb{N}: \text{$F_{\alpha}^n(x)$ and $F_{\alpha}^n(y)$ are in separate elements of $\pi_I$}\}
\]
We extend this definition to $[0,1/2]$ as follows: let $x,y \in [0,1/2]$ and denote by $x',y'$ their first returns to $(1/2,1]$. If $f_{\alpha}^i(x)$ and $f_{\alpha}^i(y)$ stay in the same partition elements of $\pi_J$ until their first return to $(1/2,1]$, set $s(x,y)=s(x',y')+1$, otherwise $s(x,y)=0$. 

Define $d_{\theta}(x,y)=\theta^{-s(x,y)}$ for some $\theta \in (1,2]$. Then $((1/2,1], \mu_{\alpha}, F_{\alpha})$ is also Gibbs--Markov for the metric $d_{\theta}$. \footnote{Indeed, expansion is evident since $\theta^{-s(F_{\alpha}(x),F_{\alpha}(y))}=\theta^{-s(x,y)+1}=\theta \cdot \theta^{-s(x,y)}$ and we get the distortion estimate by using $|F_{\alpha}(x)-F_{\alpha}(y)| \leq C \theta^{-s(F_{\alpha}(x),F_{\alpha}(y))}$.}


We now study the induced observable. Let $g$ be a function from $I_n \in \pi_I$ to $\mathbb{R}$. Define the Lipschitz semi-norm of $g$ as
\[
|g|_{Lip(I_n)}=\inf\{ C> 0:   \forall x,y \in I_n \quad |g(x)-g(y)| \leq C\theta^{-s(x,y)}\}.
\]
\begin{proposition} \label{prop:l2}
$\hat \Psi_{\alpha} \in L^2$ and $\sum_{r=1}^{\infty} \mu_{\alpha}(I_r)|\hat \Psi_{\alpha}|_{Lip(I_r)} < \infty$.
\end{proposition}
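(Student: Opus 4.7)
The plan is to bound each of the two quantities separately on each cylinder $I_r$ and then sum over $r$, using the standard tail estimate $\mu_\alpha(I_r) \leq C r^{-1/\alpha - 1}$ (which follows from $|I_r| \asymp r^{-1/\alpha-1}$ together with the boundedness of the density of $\mu_\alpha$ with respect to Lebesgue on $(1/2,1]$, a standard fact for Gibbs--Markov maps).

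For the $L^2$ bound, since $\tau_\alpha \equiv r$ on $I_r$, we have $|\hat\Psi_\alpha| \leq r\|\hat\psi_\alpha\|_\infty$ on $I_r$, whence $\int \hat\Psi_\alpha^2 \,d\mu_\alpha \leq C \sum_r r^2 \mu_\alpha(I_r) \leq C \sum_r r^{1 - 1/\alpha}$; this converges because $\alpha < 1/2$ gives $1/\alpha > 2$.

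For the Lipschitz summability I will prove the stronger statement that $|\hat\Psi_\alpha|_{Lip(I_r)} \leq C$ uniformly in $r$, so that the series reduces to $\sum_r \mu_\alpha(I_r)|\hat\Psi_\alpha|_{Lip(I_r)} \leq C\mu_\alpha((1/2,1]) = C$. Starting from $|\hat\Psi_\alpha(x) - \hat\Psi_\alpha(y)| \leq \|\hat\psi_\alpha'\|_\infty \sum_{k=0}^{r-1}|f_\alpha^k(x) - f_\alpha^k(y)|$ and using bounded distortion of $f_\alpha^k|_{I_r}:I_r \to J_{r-k}$, each summand for $k\geq 1$ is bounded by $C(|J_{r-k}|/|I_r|)|x-y|$; since $\sum_{j\geq 1}|J_j|$ telescopes to $1/2$, everything comes down to the sharp Euclidean estimate $|x-y| \leq C|I_r|\theta^{-s(x,y)}$ for $x,y \in I_r$.

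This distance bound is the main obstacle: the naive estimate $|x-y| \leq C\theta^{-s(x,y)}$ that one gets from expansion of $F_\alpha$ alone, combined with the pointwise derivative $|\hat\Psi_\alpha'| \asymp 1/|I_r|$, would only yield $|\hat\Psi_\alpha|_{Lip(I_r)} \asymp r^{1/\alpha+1}$ and a divergent series. To recover the missing factor of $|I_r|$, I will use that if $s(x,y)=n$ then $x,y$ lie in a common element $Z$ of the refined partition $\bigvee_{j=0}^{n-1}F_\alpha^{-j}\pi_I$, and $F_\alpha^{n-1}:Z\to I_{r_n}$ is a diffeomorphism with $(F_\alpha^{n-1})'(z)\asymp \prod_{j=1}^{n-1}1/|I_{r_j}|$ by the chain rule and bounded distortion on each $I_{r_j}$. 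Hence $|Z|\leq C\prod_{j=1}^n|I_{r_j}|=C|I_r|\prod_{j=2}^n|I_{r_j}|$, and since $|I_{r_j}|\leq|I_1|=1/4$ the residual product decays like $\theta^{-n}$ for any $\theta\in(1,4)$ (in particular for the $\theta\in(1,2]$ used in the paper), yielding the required distance bound.
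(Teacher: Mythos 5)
Your proof is correct, but for the Lipschitz part it takes a genuinely different --- and sharper --- route than the paper. The paper bounds each increment $|\hat\psi_\alpha(f_\alpha^k x)-\hat\psi_\alpha(f_\alpha^k y)|$ individually by $C\theta^{-s(x,y)}$, using only that $f_\alpha$ is expanding along the excursion so that $|f_\alpha^k x-f_\alpha^k y|\leq C\theta^{-s(x,y)}$, and then sums the $r$ terms to get $|\hat\Psi_\alpha|_{Lip(I_r)}\leq Cr$; summability then comes from the polynomial tail $\mu_\alpha(I_r)\leq Cr^{-1-1/\alpha}$, exactly as in your $L^2$ step. You instead prove the uniform bound $|\hat\Psi_\alpha|_{Lip(I_r)}\leq C$ by telescoping $\sum_j|J_j|\leq 1/2$ against the bounded distortion of the intermediate iterates, at the price of the refined estimate $|x-y|\leq C|I_r|\theta^{-s(x,y)}$, which you correctly derive from cylinder lengths. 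Your version needs more machinery (uniform distortion of $f_\alpha^k|_{I_r}$ for all intermediate $k$, and of $F_\alpha^{n}$ on $n$-cylinders --- both standard for this induced Gibbs--Markov map, but worth stating) and buys a Lipschitz sum that converges independently of the tail of $\mu_\alpha(I_r)$, hence for all $\alpha\in(0,1)$, whereas the paper's cruder $Cr$ bound is more elementary and already sufficient. Two small quantitative remarks: (i) the distortion computation actually gives $|Z|\asymp 2^{\,n-1}\prod_{j}|I_{r_j}|$ rather than $\prod_j |I_{r_j}|$, so the residual product only decays like $2^{-n}$ rather than $4^{-n}$ --- this still covers $\theta\in(1,2]$, which is all that is needed; (ii) for the $L^2$ part the paper separates off $\tau_\alpha\hat\psi_\alpha(0)$ so that only $\int\tau_\alpha^2\,d\mu_\alpha$ is responsible for the restriction $\alpha<1/2$, but your crude bound $|\hat\Psi_\alpha|\leq r\|\hat\psi_\alpha\|_\infty$ on $I_r$ yields the same conclusion in that regime.
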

\begin{proof}
We first show the induced observable is in $L^2$. Recall that
\[
\hat \Psi_{\alpha}(x)=\sum_{k=0}^{\tau_{\alpha}(x)-1}\hat \psi_{\alpha}(f_{\alpha}^k(x)).
\]
and $\hat \psi_{\alpha} \in C^2$. Write
\begin{align*}
\hat \Psi_{\alpha}(x)&=\hat \Psi_{\alpha}(x)-\tau_{\alpha}(x)\hat \psi_{\alpha}(0)+\tau_{\alpha}(x)\hat \psi_{\alpha}(0)\\
&=\tau_{\alpha}(x)\hat \psi_{\alpha}(0)+\sum_{k=0}^{\tau_{\alpha}(x)-1}\hat \psi_{\alpha}(f_{\alpha}^k(x))-\hat \psi_{\alpha}(0),
\end{align*}
We have $x_{n+1}-x_n \sim 1/n^{1+1/\alpha}$ (for instance by \cite[Equation (3. 149)]{A20}), thus 
\begin{align*}
\int \tau_{\alpha}^2 \asymp \sum_{k \geq 1} k^2 \cdot (x_{k+1}-x_k) \asymp \sum_{k \geq 1} k^2 \cdot 1/k^{1+1/\alpha} < \infty
\end{align*}
for all $\alpha \in (0,1/2)$. For the other term, we first exploit that $\psi$ is Lipschitz continuous with constant $C$. This implies that $\hat \psi_{\alpha}$ is Lipschitz with constant $C$, hence
\begin{align*}
\sum_{k=0}^{\tau_{\alpha}(x)-1}|\hat \psi_{\alpha}(f_{\alpha}^k(x))-\hat \psi_{\alpha}(0)| \leq C \sum_{k=0}^{\tau_{\alpha}(x)-1}|f_{\alpha}^k(x)|.
\end{align*} 
For $x \in (y_{n+1},y_n)$, we have $f_{\alpha}^i(x) \in (x_{n+1-i},x_{n-i})$, thus $|f_{\alpha}^i(x)| \leq c (n-i)^{-1/\alpha}$. Then
\[
|\hat \Psi_{\alpha}(x) - \tau_{\alpha}(x) \cdot \hat \psi_{\alpha}(0)| \leq c+c\sum_{j=1}^{n-1}(1/j)^{1/\alpha} \leq c n^{1-1/\alpha}
\]
and 
\[
\int (\hat \Psi_{\alpha} - \tau_{\alpha} \cdot \hat \psi_{\alpha}(0))^2 \asymp \sum_{k \geq 1} k^{2-2/\alpha} \cdot 1/k^{1+1/\alpha} = \sum_{k \geq 1} k^{1-3/\alpha},
\]
which is summable for all $\alpha \in (0,1)$. Thus $\hat \Psi_{\alpha} \in L^2$.

We now prove the second statement. Since the induced map on $(1/2,1]$ is uniformly expanding with expansion factor $\lambda = 2$, it is clear that $|x-y| < C\theta^{-s(x,y)}$. Using that $\hat \psi_{\alpha}$ is Lipschitz continuous, we get
$|\hat \psi_{\alpha}(x)-\hat \psi_{\alpha}(y)| \leq C |x-y| \leq C \theta^{-s(x,y)}$.

By the definition of the separation time, we can see that for $x,y \in I_r$,
\[
\sum_{k=0}^{r-1}|\hat \psi_{\alpha}(f_{\alpha}^k(x))-\hat \psi_{\alpha}(f_{\alpha}^k(y))| \leq \sum_{k=0}^{r-1}C\theta^{-(f_{\alpha}^k(x),f_{\alpha}^k(y))} \leq Cr\theta^{-s(x,y)}
\]
and thus $|\hat \Psi_{\alpha}|_{Lip(I_r)} \leq Cr$. Furthermore, since $d\mu_{\alpha}=h_{\alpha}dm$, where $h_{\alpha} \in L^{\infty}$, we have $|h_{\alpha}| \leq C_{\alpha}$ and
\[
|\mu_{\alpha}(I_r)| \leq C|m(I_r)| \leq C(1/r)^{1+1/\alpha},
\]
similarly to previous computations. This gives
\[
\sum_{r \geq 1} \mu_{\alpha}(I_r)|\hat \Psi_{\alpha}|_{Lip(I_r)} \leq C\sum_{r \geq 1} r^{-1/\alpha} < \infty.
\]
\end{proof}
The central limit theorem for the induced map (as stated below) follows by \cite[Theorem 1.4]{D14}.
\begin{theorem} \label{thm:induced}
\[
\frac{S_n \hat \Psi_{\alpha}}{\sqrt{n}} \to \mathcal{N}(0, \tilde{\sigma}^2) \quad \text{in distribution},
\]
and
\[
\tilde{\sigma}^2=\int \hat \Psi_{\alpha}^2 \:d\mu_{\alpha} + 2 \sum_{k \geq 1} \int \hat \Psi_{\alpha} \cdot \hat \Psi_{\alpha} \circ F_{\alpha}^k\: d\mu_{\alpha}.
\]
\end{theorem}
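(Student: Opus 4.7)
The plan is to recognize Theorem~\ref{thm:induced} as a direct application of the central limit theorem for Gibbs--Markov maps recorded as \cite[Theorem 1.4]{D14}, with the verification of the hypotheses already assembled in the preceding material. The argument splits into two parts: deducing the convergence in distribution from the abstract CLT, and then identifying the limiting variance with the Green--Kubo series.

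For the first part, I would collect the three ingredients required by the abstract CLT. First, the dynamical system $((1/2,1],\mu_\alpha,F_\alpha)$ equipped with the symbolic metric $d_\theta$ is a mixing Gibbs--Markov map: the Markov partition is $\pi_I$, and expansion plus bounded distortion for $d_\theta$ are recorded in the footnote preceding the statement. Second, the observable $\hat\Psi_\alpha$ satisfies $\int \hat\Psi_\alpha \,d\mu_\alpha = 0$; this is the content of the Remark following \eqref{eq:varcon} and follows from Kac's lemma. Third, the integrability/regularity hypothesis of \cite[Theorem 1.4]{D14}, namely $\hat\Psi_\alpha \in L^2(\mu_\alpha)$ together with the weighted Lipschitz summability
\[
\sum_{r \geq 1} \mu_\alpha(I_r)\,|\hat\Psi_\alpha|_{Lip(I_r)} < \infty,
\]
is exactly what Proposition~\ref{prop:l2} provides. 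Invoking the abstract theorem then yields the convergence $S_n \hat\Psi_\alpha / \sqrt{n} \to \mathcal{N}(0,\tilde\sigma^2)$ in distribution for some $\tilde\sigma^2 \geq 0$.

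For the second part, I would identify $\tilde\sigma^2$ with the Green--Kubo series. Expanding the square and using $F_\alpha$-invariance of $\mu_\alpha$,
\[
\frac{1}{n}\int (S_n \hat\Psi_\alpha)^2 \,d\mu_\alpha = \int \hat\Psi_\alpha^2 \,d\mu_\alpha + 2 \sum_{k=1}^{n-1}\Bigl(1-\tfrac{k}{n}\Bigr)\int \hat\Psi_\alpha \cdot \hat\Psi_\alpha \circ F_\alpha^k \,d\mu_\alpha.
\]
Under the same hypotheses of Proposition~\ref{prop:l2}, \cite[Theorem 1.4]{D14} guarantees absolute summability of the correlations $C_k = \int \hat\Psi_\alpha \cdot \hat\Psi_\alpha \circ F_\alpha^k \,d\mu_\alpha$, so the Césaro weights $1-k/n$ can be dropped in the limit by dominated convergence, yielding
\[
\tilde\sigma^2 = \int \hat\Psi_\alpha^2 \,d\mu_\alpha + 2 \sum_{k \geq 1} C_k.
\]

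The main subtlety---and the reason one cannot simply quote a CLT for Lipschitz observables on a uniformly expanding map---is that $\hat\Psi_\alpha$ is not globally Lipschitz: the estimate from Proposition~\ref{prop:l2} gives only $|\hat\Psi_\alpha|_{Lip(I_r)} \leq C r$, which blows up across return time levels. The decay of correlations and the Green--Kubo summability must therefore be squeezed out of the weighted condition $\sum_r \mu_\alpha(I_r)\,|\hat\Psi_\alpha|_{Lip(I_r)} < \infty$. This is precisely the form in which the abstract result of \cite{D14} is stated, so once Proposition~\ref{prop:l2} is in hand the remainder of the proof is a direct citation, and no further analysis of the transfer operator of $F_\alpha$ is needed at this step.
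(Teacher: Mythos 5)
Your proposal is correct and follows essentially the same route as the paper: the paper's entire proof of Theorem~\ref{thm:induced} is the one-line citation of \cite[Theorem 1.4]{D14}, with the hypotheses (mixing Gibbs--Markov structure for $d_\theta$, zero mean via Kac, and the $L^2$ plus weighted Lipschitz condition of Proposition~\ref{prop:l2}) established in exactly the preceding material you point to. Your additional spelling-out of the Green--Kubo identification via the Ces\`aro expansion and summability of correlations is a harmless elaboration of what the cited theorem already delivers.
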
 

Next we prove the relation between the variances corresponding to the original and the induced map.

\begin{proposition}
\[
\sigma^2(\alpha)=\frac{\tilde{\sigma}^2(\alpha)}{\int \tau_{\alpha} d \mu_{\alpha}}
\]
\end{proposition}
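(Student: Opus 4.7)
The plan is to relate ergodic sums under $f_{\alpha}$ to ergodic sums under the induced map $F_{\alpha}$ via the return time, and then invoke an Anscombe-type random-index CLT — packaged for the Gibbs--Markov setting by \cite[Theorem 4.8]{G15} — to transfer the CLT from $F_{\alpha}$ to $f_{\alpha}$ with the predicted variance ratio.

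Fix $x \in Y := (1/2,1]$, set $\tau_{\alpha}^{N} = \sum_{j=0}^{N-1} \tau_{\alpha} \circ F_{\alpha}^{j}$, and let $N_n(x)$ denote the number of visits of the orbit of $x$ to $Y$ in the first $n$ iterates of $f_{\alpha}$, so that $\tau_{\alpha}^{N_n(x)}(x) \leq n < \tau_{\alpha}^{N_n(x)+1}(x)$. Directly from the definition of $\hat{\Psi}_{\alpha}$,
\[
S_{\tau_{\alpha}^{N_n(x)}(x)} \hat{\psi}_{\alpha}(x) = S_{N_n(x)} \hat{\Psi}_{\alpha}(x),
\]
and the remainder satisfies
\[
\bigl|S_n \hat{\psi}_{\alpha}(x) - S_{N_n(x)} \hat{\Psi}_{\alpha}(x)\bigr| \leq \|\hat{\psi}_{\alpha}\|_{\infty} \cdot \tau_{\alpha}(F_{\alpha}^{N_n(x)}(x)).
\]
Since $\tau_{\alpha} \in L^2(\mu_{\alpha})$ — this is shown inside the proof of Proposition \ref{prop:l2} — the right-hand side, divided by $\sqrt{n}$, tends to zero in probability. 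Birkhoff's ergodic theorem applied to $(F_{\alpha},\mu_{\alpha})$ gives $\tau_{\alpha}^{N}/N \to \int \tau_{\alpha}\,d\mu_{\alpha}$ almost surely, and the sandwich inequality then yields
\[
\frac{N_n(x)}{n} \longrightarrow \frac{1}{\int \tau_{\alpha}\, d\mu_{\alpha}} \quad \mu_{\alpha}\text{-a.s.}
\]

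Writing
\[
\frac{S_{N_n(x)} \hat{\Psi}_{\alpha}(x)}{\sqrt{n}} = \sqrt{\frac{N_n(x)}{n}} \cdot \frac{S_{N_n(x)} \hat{\Psi}_{\alpha}(x)}{\sqrt{N_n(x)}},
\]
Theorem \ref{thm:induced} combined with the random-index CLT of \cite[Theorem 4.8]{G15} forces this to converge in distribution to $\mathcal{N}\!\bigl(0,\,\tilde{\sigma}^2(\alpha)/\int \tau_{\alpha}\,d\mu_{\alpha}\bigr)$. Slutsky's theorem together with the remainder estimate above lifts this to the same limiting distribution for $S_n\hat{\psi}_{\alpha}/\sqrt{n}$ under $\mu_{\alpha}$, and a standard absolute-continuity/$f_{\alpha}$-invariance argument extends the convergence to $\nu_{\alpha}$-a.e. $x\in [0,1]$ (any starting point reaches $Y$ in finitely many steps, contributing a bounded error). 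Matching this against the limit in Theorem \ref{thm:clt} yields the claimed identity.

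The only genuine obstacle is the random-index step: the distributional convergence of $S_N \hat{\Psi}_{\alpha}/\sqrt{N}$ along the deterministic index $N$ is not automatically preserved when $N$ is replaced by the $x$-dependent stopping time $N_n(x)$. What saves the day is the mixing Gibbs--Markov structure recorded in Proposition \ref{prop:l2}, which is exactly the hypothesis of \cite[Theorem 4.8]{G15}; once that theorem is available, the remaining steps (Birkhoff, Slutsky, transfer from $\mu_{\alpha}$ to $\nu_{\alpha}$) are routine.
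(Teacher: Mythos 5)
Your overall strategy coincides with the paper's: both reduce the identity to \cite[Theorem 4.8]{G15} combined with Kac's formula $\nu_{\alpha}(Y)=1/\int\tau_{\alpha}\,d\mu_{\alpha}$. The gap is that you never verify the hypotheses of that theorem, and the substitute you offer is too weak. Gouëzel's inducing theorem is not an off-the-shelf Anscombe statement whose hypothesis is ``mixing Gibbs--Markov structure''; it requires (i) tightness of $(S_n\tau_{\alpha}-n/\nu_{\alpha}(Y))/\sqrt{n}$ under the induced dynamics and (ii) $\sup_{0\le k\le\sqrt{n}}|S_k\hat\Psi_{\alpha}|/\sqrt{n}\to 0$ in probability. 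Your Birkhoff-based statement $N_n/n\to 1/\int\tau_{\alpha}\,d\mu_{\alpha}$ a.s.\ is only a law of large numbers for the visit counter: it localizes $N_n$ to a window of size $o(n)$, whereas the random-index step needs localization at scale $O(\sqrt{n})$ together with a maximal-inequality-type control of the oscillation of $S_k\hat\Psi_{\alpha}$ over such windows --- this is exactly the ``uniform continuity in probability'' condition that makes Anscombe-type arguments nontrivial for dependent sequences. Asserting that \cite[Theorem 4.8]{G15} ``forces'' the convergence of $S_{N_n}\hat\Psi_{\alpha}/\sqrt{N_n}$ therefore delegates the one genuinely hard step to a citation whose hypotheses you have neither stated correctly nor checked.

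The paper supplies (i) by a small trick you are missing: apply Theorem \ref{thm:induced} to the observable equal to $1-1/\nu_{\alpha}(Y)$ on $Y$ and $1$ elsewhere, whose induced observable is precisely $\tau_{\alpha}-1/\nu_{\alpha}(Y)$; the resulting CLT for the induced sums of the return time gives the required tightness (this is where $\tau_{\alpha}\in L^2(\mu_{\alpha})$, i.e.\ $\alpha<1/2$, and the Lipschitz summability of Proposition \ref{prop:l2} for this particular observable enter --- you invoke $\tau_{\alpha}\in L^2$ only for the remainder term). Condition (ii) follows from Birkhoff applied to $\hat\Psi_{\alpha}$, close to what you already do. Your bookkeeping for the last incomplete excursion and the passage from $\mu_{\alpha}$ to $\nu_{\alpha}$ are fine; replacing the hand-waved random-index step by the verification of (i) and (ii) recovers the paper's proof.
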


\begin{proof}
Let $Y= (1/2,1]$. Recall that
\[
\frac{S_n \hat \Psi_{\alpha}}{\sqrt{n}} \to \mathcal{N}(0, \tilde{\sigma}^2(\alpha)).
\]
We first show that
\begin{equation}
\frac{S_n \tau_{\alpha}-n/\nu_{\alpha}(Y)}{\sqrt{n}} \text{ is tight and } \sup_{0 \leq k \leq \sqrt{n}}\frac{|S_k \hat \Psi_{\alpha}|}{\sqrt{n}} \text{ tends to 0 in probability.}
\end{equation}
Indeed, consider the observable $g$ equal to $1-1/\nu_{\alpha}(Y)$ on $Y$ and 1 elsewhere (this observable is not $C^2$, but $C^2$ on [0,1/2] and [1/2,1] which is sufficient). Then the corresponding induced observable is $\tau_{\alpha}-1/\nu_{\alpha}(Y)$, and by Theorem \ref{thm:induced}, $\frac{S_n \tau_{\alpha}-n/\nu_{\alpha}(Y)}{\sqrt{n}}$ converges to a Gaussian random variable, hence the sequence $\frac{S_n \tau_{\alpha}-n/\nu_{\alpha}(Y)}{\sqrt{n}}$ is tight. To show that $\sup_{0 \leq k \leq \sqrt{n}}\frac{|S_k \hat \Psi_{\alpha}|}{\sqrt{n}}$ tends to 0 in probability, it suffices to show that $\frac{S_k \hat \Psi_{\alpha}}{k}$ tends almost surely to 0. This follows from the Birkhoff ergodic theorem, since $\hat \Psi_{\alpha}$ is integrable. 

Thus by \cite[Theorem 4.8]{G15}, we have
\[
\frac{S_n \hat \psi_{\alpha}}{\sqrt{\lfloor n \nu_{\alpha}(Y)\rfloor}} \to \mathcal{N}(0, \tilde{\sigma}^2(\alpha))
\]
By recalling that $\frac{S_n \hat \psi_{\alpha}}{\sqrt{n}} \to \mathcal{N}(0, \sigma^2(\alpha))$, we obtain that $\tilde{\sigma}^2(\alpha)\nu_{\alpha}(Y)=\sigma^2(\alpha)$. By Kac's formula, $\nu_{\alpha}(Y) = \frac{1}{\int \tau_{\alpha}d\mu_{\alpha}}$ giving the result. 

\end{proof}
\subsection{Analysis of the Green--Kubo formula.}

In this section we will prove that 
\[
\alpha \mapsto \int \hat \Psi_{\alpha}^2 h_{\alpha}\:dm + 2 \sum_{k \geq 1} \int \hat \Psi_{\alpha} \cdot \hat \Psi_{\alpha} \circ F_{\alpha}^k \cdot h_{\alpha}\:dm \in C^1([\alpha_{-},\alpha_+])
\]
for any $0< \alpha_{-} < \alpha_+ < 1/2$. As a first step we prove that the correlations are continuously differentiable.

We first recall a key lemma from \cite{K16} which we will use on many occasions. To state this lemma we introduce some notation. Denote by $g_{\alpha}$ the inverse of the left branch of $f_{\alpha}$, and for $z \in [0,1]$ let $z_r = g_{\alpha}^r(z)$. Furthermore, denote $F_{\alpha}|_{I_n} = F_{\alpha,n}$ and $G_{\alpha,n}=(F_{\alpha,n}^{-1})'$. Finally, define
\[
\logg x = \begin{cases}
1 & \quad x \leq e \\
\log x &\quad x>e
\end{cases}
\]
where $\log$ is the logarithm with base $e$.
\begin{lemma} \label{lem:k16} For $n \in \mathbb{N}$,
\begin{enumerate}
\item[$(K0)$] $|z'_n| \leq 1$ {\cite[Equation 5.5]{K16}}
\item[$(K1)$] $|z'_n| \leq Cn^{-(\alpha+1)/\alpha}$ {\cite[Lemma 5.3]{K16}}
\item[$(K2)$] $|z''_n/z'_n| \leq C \Rightarrow \|G'_{\alpha,n}/G_{\alpha,n}\|_{\infty} \leq C$ {\cite[Lemma 5.4]{K16}}
\item[$(K3)$] $|z'''_n/z'_n| \leq C \Rightarrow \|G''_{\alpha,n}/G_{\alpha,n}\|_{\infty} \leq C$ {\cite[Lemma 5.5]{K16}}
\item[$(K4)$] $|\partial_{\alpha}z_n| \leq Cn^{-1/\alpha}(\logg n)^2$ {\cite[Lemma 5.6]{K16}}
\item[$(K5)$] $|\partial_{\alpha}z'_n/z'_n| \leq C(\logg n)^3 \Rightarrow \|\partial_\alpha G_{\alpha,n}/G_{\alpha,n}\|_{\infty} \leq C(\logg n)^3$ {\cite[Lemma 5.7]{K16}}
\item[$(K6)$] $\|\partial_\alpha F^{-1}_{\alpha,n}\|_{\infty} \leq Cn^{-1/\alpha}(\logg n)^2 \leq C(\logg n)^2$ {\cite[Lemma 5.6 and Equation 5.1]{K16}}
\end{enumerate}
\end{lemma}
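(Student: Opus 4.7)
The plan is to quote \cite{K16} directly for each of the seven items: this lemma simply collects technical estimates established there, with each item carrying a precise citation to the relevant lemma or equation. Nevertheless, I outline the underlying ideas to indicate why each bound is plausible and what a self-contained proof would involve.

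Items (K0)--(K3) describe the backward orbit $z_n = g_\alpha^n(z)$ under the inverse of the left branch. Near $0$ one has the expansion $g_\alpha(x) = x - 2^\alpha x^{1+\alpha} + O(x^{1+2\alpha})$, so the recursion $z_{n+1}-z_n \sim -2^\alpha z_n^{1+\alpha}$ is well approximated by $\dot z = -2^\alpha z^{1+\alpha}$, giving $z_n$ of order $n^{-1/\alpha}$. (K0) follows from $|g'_\alpha| \leq 1$ on $[0,1/2]$. For (K1), write $z'_n = \prod_{k=0}^{n-1} g'_\alpha(z_k)$ and use $g'_\alpha(z_k) = 1 - (1+\alpha)2^\alpha z_k^\alpha + \ldots$ together with $z_k^\alpha$ of order $1/k$ to produce the polynomial rate. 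Items (K2) and (K3) are bounded distortion estimates of Koebe type, obtained by bounding $|g''_\alpha|/|g'_\alpha|$ and $|g'''_\alpha|/|g'_\alpha|$ along the orbit and summing; the implications to $G_{\alpha,n}$ follow from the chain rule applied to $F^{-1}_{\alpha,n} = g_\alpha^{n-1} \circ r_\alpha$, where $r_\alpha$ is the inverse of the right branch.

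For (K4)--(K5) one differentiates the recursion in $\alpha$. Setting $w_n = \partial_\alpha z_n$, the linear recursion $w_{n+1} = g'_\alpha(z_n) w_n + \partial_\alpha g_\alpha(z_n)$ has solution
\[
w_n = \sum_{k=0}^{n-1} \Bigl(\prod_{j=k+1}^{n-1} g'_\alpha(z_j)\Bigr) \, \partial_\alpha g_\alpha(z_k),
\]
and the expansion $\partial_\alpha g_\alpha(x) \sim -2^\alpha x^{1+\alpha}\log(2x)$ produces the bound $|w_n| \leq C n^{-1/\alpha}(\logg n)^2$ by combining the polynomial decay of $z_k$ with the logarithmic factor. (K5) proceeds analogously after a further differentiation in $z$, using the distortion bound from (K2) to control the ratio $\partial_\alpha z'_n / z'_n$. (K6) transfers these pointwise estimates to $F_{\alpha,n}^{-1}$ via the same factorisation into $r_\alpha$ followed by $n-1$ applications of $g_\alpha$, the extra $\alpha$-dependence of $r_\alpha$ being harmless as the right branch is independent of $\alpha$.

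The main obstacle in a fully self-contained treatment would be uniformity of constants across $\alpha \in [\alpha_-,\alpha_+]$ and the careful bookkeeping required to avoid losing powers when iterating linear recursions whose coefficients decay only polynomially. Since \cite{K16} carries out this analysis in detail and all the implicit constants are uniform on the compact parameter range of interest, my proposal is simply to cite the corresponding lemma or equation of \cite{K16} for each item as needed in the sequel.
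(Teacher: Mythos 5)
Your proposal matches the paper exactly: the paper provides no proof of this lemma, treating it purely as a collection of estimates imported from \cite{K16} with the citations given item by item in the statement itself, which is precisely what you propose to do. Your additional sketches of the underlying recursions for $z_n$ and $\partial_\alpha z_n$ are a reasonable summary of how \cite{K16} obtains these bounds, but they are not needed here.
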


Recall that the transfer operator $P$ of a nonsingular map $T: I \to I$ is defined as the left adjoint of the composition (Koopman operator), that is,
\[
\int_I \varphi \cdot P \psi \text{ d}m =  \int_I \varphi \circ T \cdot \psi \text{ d}m, \qquad \varphi \in L^{\infty}, \psi \in L^1. 
\]
We will denote the transfer operator of $T_{\alpha}$ by $P_{\alpha}$.

\begin{proposition} \label{prop:cont}
\[
\alpha \mapsto \int \hat \Psi_{\alpha} \cdot \hat \Psi_{\alpha} \circ F_{\alpha}^k \cdot h_{\alpha}  \:dm \in C^1([\alpha_-,\alpha_+])
\]
for all $k \geq 0$.
\end{proposition}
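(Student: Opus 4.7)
The plan is to decompose the integral as a countable sum over the rank-$k$ cylinders of the Gibbs--Markov induced map $F_\alpha$, change variables on each cylinder to the common base $(1/2,1]$, and then differentiate under the sum and the integral using a dominated-convergence argument driven by Lemma \ref{lem:k16} and the linear response results of \cite{BT16,K16}.

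First, I would index the rank-$k$ cylinders of $F_\alpha$ by words $\omega=(r_0,\ldots,r_{k-1})\in\NN^k$, setting $I_\omega=\{x\in(1/2,1]:F_\alpha^i(x)\in I_{r_i},\,0\le i\le k-1\}$, writing $\phi_{\alpha,\omega}$ for the inverse of $F_\alpha^k|_{I_\omega}$ and $G_{\alpha,\omega}=|\phi'_{\alpha,\omega}|$. The substitution $y=F_\alpha^k(x)$ gives
\[
\int\hat\Psi_\alpha\cdot\hat\Psi_\alpha\circ F_\alpha^k\cdot h_\alpha\,dm=\sum_{\omega\in\NN^k}\int_{(1/2,1]}H_\omega(\alpha,y)\,dm(y),
\]
with $H_\omega(\alpha,y)=\hat\Psi_\alpha(\phi_{\alpha,\omega}(y))\,\hat\Psi_\alpha(y)\,h_\alpha(\phi_{\alpha,\omega}(y))\,G_{\alpha,\omega}(y)$.

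Second, I would differentiate $H_\omega$ in $\alpha$ via Leibniz and the chain rule. The $\alpha$-dependence enters through: (i) the centering $\hat\psi_\alpha=\psi-\int\psi\rho_\alpha\,dm$ (smooth in $\alpha$ by linear response) and the iterates $f_\alpha^j$ inside $\hat\Psi_\alpha$ (controlled by (K4) and (K6)); (ii) the inverse branch $\phi_{\alpha,\omega}$, where iterating (K6) along $\omega$ together with the bounded-distortion estimate $|(F_{\alpha,r}^{-1})'|\le Cm(I_r)$ yields the key pointwise bound
\[
|\partial_\alpha\phi_{\alpha,\omega}(y)|\le C^k\sum_{i=0}^{k-1}\Big(\prod_{j<i}m(I_{r_j})\Big)\,r_i^{-1/\alpha}(\logg r_i)^2;
\]
(iii) the density $h_\alpha$, for which \cite{K16} provides a $C^1$ expression with controlled pointwise derivative; (iv) the Jacobian $G_{\alpha,\omega}$, for which iterating (K5) gives $\|\partial_\alpha G_{\alpha,\omega}/G_{\alpha,\omega}\|_\infty\le C\sum_i(\logg r_i)^3$, alongside the distortion bound $G_{\alpha,\omega}\le C^k\prod_i m(I_{r_i})\le C^k\prod_i r_i^{-(1+1/\alpha)}$.

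Third, I would combine these with pointwise estimates on $\hat\Psi_\alpha$ along the lines of Proposition \ref{prop:l2}: on $I_{r_0}$ one has $|\hat\Psi_\alpha(x)|\le Cr_0$ and $|\hat\Psi'_\alpha(x)|\le Cr_0^{1+1/\alpha}$ (the latter follows from the chain rule together with $(f_\alpha^j)'(x)\lesssim r_0^{1+1/\alpha}$, comparable to $m(I_{r_0})^{-1}$), plus the integrated estimate $\int|\hat\Psi_\alpha(y)|G_{\alpha,\omega}(y)\,dm(y)\le Cm(I_\omega)$ obtained from bounded distortion and $\hat\Psi_\alpha\in L^1$. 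Substituting into each Leibniz term, the potentially dangerous factor $r_0^{1+1/\alpha}$ (arising from $\hat\Psi'_\alpha$) cancels against one copy of $r_0^{-(1+1/\alpha)}$ from $m(I_\omega)$, leaving polynomial decay in every $r_i$ with rate at least $1/\alpha>2$, multiplied only by $\logg$-factors. Since $1+1/\alpha>3$ on $[\alpha_-,\alpha_+]$, the multiple sum over $\omega$ converges, uniformly in $\alpha$.

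Dominated convergence then legitimises exchanging $\partial_\alpha$ with both the countable sum and the integral, and continuity of each $\partial_\alpha\int H_\omega\,dm$ in $\alpha$ transfers to the series because the dominating bound is uniform in $\alpha\in[\alpha_-,\alpha_+]$. The main obstacle is the chain-rule term $\hat\Psi'_\alpha(\phi_{\alpha,\omega}(y))\cdot\partial_\alpha\phi_{\alpha,\omega}(y)$: the spatial derivative $\hat\Psi'_\alpha$ blows up polynomially in $r_0$, so a merely logarithmic bound on $\partial_\alpha\phi_{\alpha,\omega}$ would not close the argument. Closing it requires the polynomial decay $r_i^{-1/\alpha}$ furnished by (K6), together with careful term-by-term summation along $\omega$; the hypothesis $\alpha_+<1/2$ is used precisely to guarantee that the resulting multiple series is absolutely summable.
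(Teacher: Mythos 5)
Your overall strategy --- resolve the correlation into a sum over inverse branches of $F_\alpha^k$, then differentiate term by term using (K0)--(K6) and dominated convergence --- is the same as the paper's, and your estimates for the inner factor $\hat\Psi_\alpha\circ\phi_{\alpha,\omega}$, the Jacobian $G_{\alpha,\omega}$ and the density $h_\alpha$ are essentially sound (your cancellation of $r_0^{1+1/\alpha}$ from $\hat\Psi_\alpha'$ against the Jacobian, using the refined bound on $\partial_\alpha\phi_{\alpha,\omega}$, is a workable if slightly more delicate alternative to the paper's route, which instead exploits that $(\hat\Psi_\alpha\circ F_{\alpha,n}^{-1})'$ is only $O(n)$ because $|z_j'|\le1$). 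But there is a genuine gap in how you treat the \emph{outer} factor $\hat\Psi_\alpha(y)$. After your change of variables, $y$ ranges over all of $(1/2,1]$ and $\hat\Psi_\alpha(y)=\sum_{j=0}^{\tau_\alpha(y)-1}\hat\psi_\alpha(f_\alpha^j(y))$ still carries the $\alpha$-dependent return time $\tau_\alpha(y)$: the number of terms in this sum is a step function of $\alpha$ for fixed $y$, jumping as the partition endpoints $y_r(\alpha)$ sweep past $y$. Concretely, the one-sided limits of $\hat\Psi_\alpha$ at $y_r$ differ by $\hat\psi_\alpha(1/2)$, which is generically nonzero. Hence $\alpha\mapsto\hat\Psi_\alpha(y)$ is not even continuous pointwise (nor is $\alpha\mapsto\hat\Psi_\alpha$ differentiable in $L^1$: difference quotients of a family with moving jumps concentrate like Dirac masses), so ``differentiating under the integral sign by dominated convergence'' is not legitimate for this factor. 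Equivalently, if you instead split $\int_{(1/2,1]}$ over the moving intervals $I_r(\alpha)$, Leibniz's rule produces boundary terms $\sum_r[\hat\Psi_\alpha](y_r)\,(\cdots)(y_r)\,\partial_\alpha y_r$ that do not telescope and that your computation omits. Your item (i) accounts for the $\alpha$-dependence of the centering and of the iterates $f_\alpha^j$, but not of the summation length $\tau_\alpha$.

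The fix is exactly the extra step the paper takes: apply the transfer operator one more time (equivalently, further decompose the base variable $y$ over the return-time partition $\{I_{r_k}\}$ and change variables once more), so that the correlation becomes a $(k{+}1)$-fold sum in which the outer copy of $\hat\Psi_\alpha$ also appears as $\hat\Psi_\alpha\circ F_{\alpha,r_k}^{-1}$ with a \emph{frozen} return time $r_k$. Each summand is then jointly smooth in $(\alpha,x)$, and your summability estimates (with one additional index) close the argument; indeed they simplify, because composing with the contracting inverse branch caps the spatial derivative of $\hat\Psi_\alpha\circ F_{\alpha,r}^{-1}$ at $O(r)$ and the $r_0^{1+1/\alpha}$ cancellation is no longer needed. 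Note also that your scheme degenerates for $k=0$ (no cylinder sum at all, both copies of $\hat\Psi_\alpha$ unresolved), whereas the statement is claimed for all $k\ge0$; the extra application of $P_\alpha$ handles that case too.
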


\begin{proof}
Write
\begin{align*}
\int \hat \Psi_{\alpha} \cdot \hat \Psi_{\alpha} \circ F_{\alpha}^k \cdot h_{\alpha}  \:dm &= \int P_{\alpha}^k(\hat \Psi_{\alpha}h_{\alpha}) \cdot \hat \Psi_{\alpha}  \:dm \\
&= \int P_{\alpha}\left(P_{\alpha}^k(\hat \Psi_{\alpha}h_{\alpha}) \cdot \hat \Psi_{\alpha} \right)  \:dm
\end{align*}
For $\underline{n}=(n_1,\dots,n_k)$ let $F_{\alpha, \underline{n}}=F_{\alpha,n_1} \circ \dots \circ F_{\alpha,n_k}$ and $G_{\alpha, \underline{n}}=(F_{\alpha, \underline{n}}^{-1})'$. With this notation we can write
\begin{align}
&\int P_{\alpha}\left(P_{\alpha}^k(\hat \Psi_{\alpha}h_{\alpha}) \cdot \hat \Psi_{\alpha} \right)  \:dm \nonumber \\
&= \int \sum_{n_k=1}^{\infty}\dots \sum_{n_0=1}^{\infty} (h_{\alpha} \circ F_{\alpha, \underline{n}}^{-1} \circ F_{\alpha, n_0}^{-1}) (\hat \Psi_{\alpha} \circ F_{\alpha, \underline{n}}^{-1} \circ F_{\alpha, n_0}^{-1})  (\hat \Psi_{\alpha} \circ F_{\alpha, n_0}^{-1})  \nonumber \\
&\qquad \times G_{\alpha, \underline{n}} \circ F_{\alpha, n_0}^{-1} \cdot G_{\alpha,n_0}  \:dm \label{eq:corrdet}
\end{align}
and
\begin{align}
&\partial_{\alpha} \int P_{\alpha}\left(P_{\alpha}^k(\hat \Psi_{\alpha}h_{\alpha}) \cdot \hat \Psi_{\alpha} \right)  \:dm \nonumber \\
&= \int \sum_{n_k=1}^{\infty}\dots \sum_{n_0=1}^{\infty} \partial_{\alpha}\{(h_{\alpha} \circ F_{\alpha, \underline{n}}^{-1} \circ F_{\alpha, n_0}^{-1}) (\hat \Psi_{\alpha} \circ F_{\alpha, \underline{n}}^{-1} \circ F_{\alpha, n_0}^{-1})  (\hat \Psi_{\alpha} \circ F_{\alpha, n_0}^{-1}) \nonumber \\
&\qquad \times G_{\alpha, \underline{n}} \circ F_{\alpha, n_0}^{-1} \cdot G_{\alpha,n_0}\}  \:dm \label{eq:corrdet2}
\end{align}
It is clear that the summands in \eqref{eq:corrdet} are jointly continuous functions of $\alpha$ and the spatial variable $x$, and this also holds for their partial derivatives with respect to $\alpha$ appearing in \eqref{eq:corrdet2}. Indeed, continuity of terms involving $h_{\alpha}$ follows by \cite[Theorem 2.1]{K16}), and for the terms involving $\hat \Psi_{\alpha}$ we rely on \cite[Theorem 1.1]{K16}).

In the rest of the proof we show that the series
\begin{align}
&\sum_{n_k=1}^{\infty}\dots \sum_{n_0=1}^{\infty} (h_{\alpha} \circ F_{\alpha, \underline{n}}^{-1} \circ F_{\alpha, n_0}^{-1}) (\hat \Psi_{\alpha} \circ F_{\alpha, \underline{n}}^{-1} \circ F_{\alpha, n_0}^{-1})  (\hat \Psi_{\alpha} \circ F_{\alpha, n_0}^{-1}) \nonumber \\
&\hspace{2.1cm} \times G_{\alpha, \underline{n}} \circ F_{\alpha, n_0}^{-1} \cdot G_{\alpha,n_0}  \label{eq:kfold} \\
&\sum_{n_k=1}^{\infty}\dots \sum_{n_0=1}^{\infty} \partial_{\alpha}\{(h_{\alpha} \circ F_{\alpha, \underline{n}}^{-1} \circ F_{\alpha, n_0}^{-1}) (\hat \Psi_{\alpha} \circ F_{\alpha, \underline{n}}^{-1} \circ F_{\alpha, n_0}^{-1})  (\hat \Psi_{\alpha} \circ F_{\alpha, n_0}^{-1}) \nonumber \\
&\hspace{2.1cm} \times G_{\alpha, \underline{n}} \circ F_{\alpha, n_0}^{-1} \cdot G_{\alpha,n_0}\} \label{eq:kfolda}
\end{align} 
converge uniformly. 

We first show that \eqref{eq:kfold} converges uniformly. This will imply the continuity of $\alpha \mapsto \int \hat \Psi_{\alpha} \cdot \hat \Psi_{\alpha} \circ F_{\alpha}^k \cdot h_{\alpha}  \:dm$. According to \cite[Theorem 2.1]{K16}, $h_{\alpha} \in C^2([1/2,1])$, hence $\|h_{\alpha}\|_{\infty} \leq K$ where $K$ does not depend on $\alpha$.	

First compute that
\begin{align*}
(\hat \Psi_{\alpha} \circ F_{\alpha, n_0}^{-1})(z)&=\hat \psi_{\alpha}\left(\frac{z_{n_0}+1}{2} \right)+\sum_{j=0}^{n_0-1} \hat \psi_{\alpha}(f_{\alpha}^j(z_{n_0}))\\
&=\hat \psi_{\alpha}\left(\frac{z_{n_0}+1}{2} \right)+\sum_{j=1}^{n_0} \hat \psi_{\alpha}(z_{j})
\end{align*}
giving $\|\hat \Psi_{\alpha} \circ F_{\alpha, n_0}^{-1}\|_{\infty} \leq C \|\hat \psi_{\alpha}\|_{\infty} (n_0+1) \leq 2C \| \psi\|_{\infty} (n_0+1)$. The same argument gives also $\|\hat \Psi_{\alpha} \circ F_{\alpha, \underline{n}}^{-1} \circ F_{\alpha, n_0}^{-1}\|_{\infty} \leq C (n_0+1)$.

We have $\|G_{\alpha,n_0}\|_{\infty}=\frac{1}{2} \sup_z |z_{n_0}'| \leq C n_0^{-\left(1+\frac{1}{\alpha}\right)}$ by (K1) from Lemma \ref{lem:k16}. Furthermore,
\[
G_{\alpha, \underline{n}}=(F_{\alpha,n_k}^{-1} \circ \dots \circ F_{\alpha,n_1}^{-1})'= \Pi_{i=1}^k (F_{\alpha,n_i}^{-1})' (F_{\alpha,n_{i-1}}^{-1} \circ \dots \circ F_{\alpha,n_1}^{-1})
\]
hence $\|G_{\alpha,\underline{n}}\|_{\infty} \leq \|G_{\alpha,n_1}\|_{\infty}\dots \|G_{\alpha,n_k}\|_{\infty} \leq C (n_1\dots n_k)^{-\left(1+\frac{1}{\alpha}\right)}$ (note that $C=C(k)$, but this causes no issue for the present argument as we view $k$ fixed.)

Putting all this together, we can bound the $k$-fold sum in \eqref{eq:corrdet} by
\[
C\sum_{n_k=1}^{\infty}\dots \sum_{n_0=1}^{\infty} (n_0+1)^2 (n_0n_1\dots n_k)^{-\left(1+\frac{1}{\alpha}\right)},
\]
which is finite if $\alpha \in (0,1/2)$.

To show that \eqref{eq:kfolda} converges uniformly, we first study the $\partial_\alpha$ partial derivatives of the summands in \eqref{eq:corrdet}, starting with
\begin{align*}
\partial_{\alpha}(h_{\alpha} \circ F_{\alpha,(\underline{n},n_0)}^{-1})=(\partial_{\alpha} h_{\alpha}) \circ F_{\alpha,(\underline{n},n_0)}^{-1} + h'_{\alpha} \circ F_{\alpha,(\underline{n},n_0)}^{-1}\partial_{\alpha} F_{\alpha,(\underline{n},n_0)}^{-1}
\end{align*}
By \cite[Theorem 2.1]{K16} $\partial_{\alpha} h_{\alpha} \in C^1([1/2,1])$ and $\|\partial_{\alpha} h_{\alpha}\|_{\infty} \leq K$, for a constant $K > 0$ independent of $\alpha$. Since
\begin{align*}
\partial_{\alpha} (F_{\alpha,n_k}^{-1} \circ \dots \circ F_{\alpha,n_0}^{-1})&=\partial_{\alpha} (F_{\alpha,n_k}^{-1}) \circ (F_{\alpha,n_{k-1}}^{-1} \circ \dots \circ F_{\alpha,n_0}^{-1})\\
&+(F_{\alpha,n_k}^{-1} \circ \dots \circ F_{\alpha,n_0}^{-1})'\partial_{\alpha}(F_{\alpha,n_{k-1}}^{-1} \circ \dots \circ F_{\alpha,n_0}^{-1}) \\
&=\partial_{\alpha} (F_{\alpha,n_k}^{-1}) \circ (F_{\alpha,n_{k-1}}^{-1} \circ \dots \circ F_{\alpha,n_0}^{-1})\\
&+G_{\alpha,(n_0,\dots, n_k)}\partial_{\alpha}(F_{\alpha,n_{k-1}}^{-1} \circ \dots \circ F_{\alpha,n_0}^{-1})
\end{align*}
we obtain by induction that
\begin{equation} \label{eq:itinv}
\|\partial_{\alpha} (F_{\alpha,n_k}^{-1} \circ \dots \circ F_{\alpha,n_0}^{-1})\|_{\infty} \leq C \sum_{j=0}^k \|\partial_{\alpha} F_{\alpha,n_j}^{-1}\|_{\infty}
\end{equation}
(where we repeatedly used the fact that $\|G_{\alpha,\underline{n}}\|_{\infty} \leq C(k+1)$ when $|\underline{n}| \leq k+1$.)

By (K6) from Lemma \ref{lem:k16} we have $\|\partial_{\alpha} F_{\alpha,n_j}^{-1}\|_{\infty} \leq C(\logg n_j)^2$ and hence
\begin{align} \label{eq:firstalpha}
\|\partial_{\alpha}(h_{\alpha} \circ F_{\alpha,(\underline{n},n_0)}^{-1})\|_{\infty} \leq C \sum_{j=0}^k (\logg n_j)^2
\end{align}
Next we compute that
\begin{align*}
\partial_{\alpha}(\hat \Psi_{\alpha} \circ F_{\alpha, n_0}^{-1})(z)=\psi'\left(\frac{z_{n_0}+1}{2} \right)\partial_{\alpha}z_{n_0}+\sum_{j=1}^{n_0} \psi'(z_{j})\partial_{\alpha}z_{n_j}-(n_0+1) \partial_{\alpha}\hat \psi_{\alpha}
\end{align*}
Since $|\partial_{\alpha}z_{n_j}| \leq C$ by (K4) from Lemma \ref{lem:k16} and $|\partial_{\alpha}\hat \psi_{\alpha}| \leq C$ by \cite[Theorem 1.1]{K16} we obtain the bound $\|\partial_{\alpha}(\hat \Psi_{\alpha} \circ F_{\alpha, n_0}^{-1})\|_{\infty} \leq C(\|\psi\|_{C^1}+1) (n_0+1)$ and by a similar argument 
\begin{equation} \label{eq:secondalpha}
\|\partial_{\alpha}(\hat \Psi_{\alpha} \circ F_{\alpha, (\underline{n},n_0)}^{-1})\|_{\infty} \leq C\|\psi\|_{C^1}(n_0+1)
\end{equation}
Next,
\[
G_{\alpha,\underline{n}} \circ F_{\alpha,n_0}^{-1}=\Pi_{i=1}^k G_{\alpha,n_i} \circ F_{\alpha,n_{i-1}}^{-1} \circ \dots \circ F_{\alpha,n_{0}}^{-1}
\]
and
\begin{align*}
\partial_{\alpha} (G_{\alpha,\underline{n}} \circ F_{\alpha,n_0}^{-1})=\sum_{j=1}^k &\partial_{\alpha}[G_{\alpha,n_j} \circ F_{\alpha,n_{j-1}}^{-1} \circ \dots \circ F_{\alpha,n_{0}}^{-1}] \\
&\times \Pi_{i \neq j} G_{\alpha,n_i} \circ F_{\alpha,n_{i-1}}^{-1} \circ \dots \circ F_{\alpha,n_{0}}^{-1}
\end{align*}
We have
\begin{align*}
&\partial_{\alpha}(G_{\alpha,n_j} \circ F_{\alpha,n_{j-1}}^{-1} \circ \dots \circ F_{\alpha,n_{0}}^{-1})\\
&=\partial_{\alpha}(G_{\alpha,n_j}) \circ F_{\alpha,n_{j-1}}^{-1} \circ \dots \circ F_{\alpha,n_{0}}^{-1}\\
&+(G_{\alpha,n_j} \circ F_{\alpha,n_{j-1}}^{-1} \circ \dots \circ F_{\alpha,n_{0}}^{-1})' \cdot \partial_{\alpha}(F_{\alpha,n_{j-1}}^{-1} \circ \dots \circ F_{\alpha,n_{0}}^{-1}) \\
&=\partial_{\alpha}(G_{\alpha,n_j}) \circ F_{\alpha,n_{j-1}}^{-1} \circ \dots \circ F_{\alpha,n_{0}}^{-1}\\
&+G_{\alpha,n_j}' \circ F_{\alpha,n_{j-1}}^{-1} \circ \dots \circ F_{\alpha,n_{0}}^{-1} \cdot (F_{\alpha,n_{j-1}}^{-1} \circ \dots \circ F_{\alpha,n_{0}}^{-1})' \cdot \partial_{\alpha}(F_{\alpha,n_{j-1}}^{-1} \circ \dots \circ F_{\alpha,n_{0}}^{-1})
\end{align*}
By (K2) from Lemma \ref{lem:k16} we have $\|G'_{\alpha,n_j}/G_{\alpha,n_j}\|_{\infty} \leq C$ and by (K5) we see that  $\|\partial_{\alpha}G_{\alpha,n_j}/G_{\alpha,n_j}\|_{\infty} \leq C$.  Using furthermore \eqref{eq:itinv} we obtain
\[
\|\partial_{\alpha}(G_{\alpha,n_j} \circ F_{\alpha,n_{j-1}}^{-1} \circ \dots \circ F_{\alpha,n_{0}}^{-1})\|_{\infty} \leq Cn_j^{-\left(1+\frac{1}{\alpha}\right)}\left((\logg n_j)^3+\sum_{m=0}^{j-1}(\logg n_m)^2\right)
\]
and hence
\begin{equation} \label{eq:thirdalpha}
\|\partial_{\alpha} (G_{\alpha,\underline{n}} \circ F_{\alpha,n_0}^{-1})\|_{\infty} \leq C\Pi_{j=1}^k n_j^{-\left(1+\frac{1}{\alpha}\right)} \sum_{j=1}^k \left((\logg n_j)^3+\sum_{m=0}^{j-1}(\logg n_m)^2\right)
\end{equation}
Using the previously computed bounds on the supremums and the bounds \eqref{eq:firstalpha}, \eqref{eq:secondalpha} and \eqref{eq:thirdalpha} on the partial derivative, we obtain for \eqref{eq:kfolda}
\begin{align*}
\sum_{n_k=1}^{\infty}\dots &\sum_{n_0=1}^{\infty} \partial_{\alpha}\{(h_{\alpha} \circ F_{\alpha, \underline{n}}^{-1} \circ F_{\alpha, n_0}^{-1}) (\hat \Psi_{\alpha} \circ F_{\alpha, \underline{n}}^{-1} \circ F_{\alpha, n_0}^{-1})  (\hat \Psi_{\alpha} \circ F_{\alpha, n_0}^{-1}) \nonumber \\
&\hspace{0.8cm}\times G_{\alpha, \underline{n}} \circ F_{\alpha, n_0}^{-1} \cdot G_{\alpha,n_0}\} \\
\leq C\sum_{n_k=1}^{\infty}\dots &\sum_{n_0=1}^{\infty}(n_0+1)^2(n_0\dots n_k)^{-\left(1+\frac{1}{\alpha}\right)}\Pi_{i=0}^k(\logg n_i)^2 \\
& +2(n_0+1)^2(n_0\dots n_k)^{-\left(1+\frac{1}{\alpha}\right)} \\
& +(n_0+1)^2(n_0\dots n_k)^{-\left(1+\frac{1}{\alpha}\right)}\Pi_{i=1}^k(\logg n_i)^3 \Pi_{i=0}^{k-1}(\logg n_i)^{2(k-i)} \\
& < \infty 
\end{align*}
for $\alpha \in (0,1/2)$ and thus $\alpha \mapsto \partial_{\alpha} \int P_{\alpha}\left(P_{\alpha}^k(\hat \Psi_{\alpha}h_{\alpha}) \cdot \hat \Psi_{\alpha} \right)$ is continuous.
\end{proof}

The next proposition claims that both the correlations and their partial derivatives with respect to $\alpha$ are summable. Then it follows from Propositions \ref{prop:cont} and \ref{prop:sum} that $\alpha \mapsto \tilde{\sigma}(\alpha) \in C^1([\alpha_-,\alpha_+])$ which concludes the proof of Theorem \ref{thm:main}.

\begin{proposition} \label{prop:sum}
The series
\[
\sum_{k \geq 1} \int \hat \Psi_{\alpha} \cdot \hat \Psi_{\alpha} \circ F_{\alpha}^k \cdot h_{\alpha} \: dm \quad \text{and} \quad \sum_{k \geq 1} \partial_{\alpha}\left(\int \hat \Psi_{\alpha} \cdot \hat \Psi_{\alpha} \circ F_{\alpha}^k \cdot h_{\alpha}\: dm\right)
\]
converge uniformly.	
\end{proposition}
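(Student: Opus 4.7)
The plan is to establish the uniform-in-$\alpha$ estimates
\[
|C_k(\alpha)| \leq C\rho^k \qquad \text{and} \qquad |\partial_\alpha C_k(\alpha)| \leq C(k+1)\rho^k
\]
for some $\rho \in (0,1)$, where $C_k(\alpha) := \int \hat\Psi_\alpha \cdot \hat\Psi_\alpha \circ F_\alpha^k \cdot h_\alpha\, dm$. Both series then converge uniformly on $[\alpha_-,\alpha_+]$ by the Weierstrass M-test, which is exactly the claim.

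For the first series, I would pass to the transfer operator representation $C_k(\alpha) = \int \hat P_\alpha^k(\hat\Psi_\alpha h_\alpha)\cdot \hat\Psi_\alpha\, dm$, where $\hat P_\alpha$ denotes the transfer operator of $F_\alpha$ with respect to Lebesgue. Since $F_\alpha$ is mixing Gibbs--Markov and the expansion/distortion bounds (K0)--(K3) of Lemma \ref{lem:k16} are uniform in $\alpha \in [\alpha_-,\alpha_+]$, $\hat P_\alpha$ admits a uniform spectral gap on a Banach space $\mathcal B$ of piecewise Lipschitz functions in the $d_\theta$-metric with a weighted seminorm of the type $\sum_r \mu_\alpha(I_r)|\cdot|_{Lip(I_r)}$. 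Proposition \ref{prop:l2} already gives the uniform bound $\|\hat\Psi_\alpha h_\alpha\|_{\mathcal B}\leq K$ (the unbounded component $\tau_\alpha\hat\psi_\alpha(0)$ is constant on each $I_r$, so contributes only to the sup-part of the norm, which is itself controlled since $\alpha<1$). Since $\int \hat\Psi_\alpha h_\alpha\, dm = \int \hat\Psi_\alpha\, d\mu_\alpha = 0$, the spectral gap yields $\|\hat P_\alpha^k(\hat\Psi_\alpha h_\alpha)\|_\infty \leq C\rho^k$ and hence $|C_k(\alpha)|\leq C\rho^k$ uniformly.

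For the second series I differentiate under the integral and invoke the non-commutative Leibniz formula $\partial_\alpha \hat P_\alpha^k = \sum_{j=0}^{k-1} \hat P_\alpha^j\,(\partial_\alpha \hat P_\alpha)\,\hat P_\alpha^{k-1-j}$, obtaining
\[
\partial_\alpha C_k = \int \hat P_\alpha^k(\hat\Psi_\alpha h_\alpha)\cdot\partial_\alpha\hat\Psi_\alpha\, dm + \int \hat P_\alpha^k\!\bigl(\partial_\alpha(\hat\Psi_\alpha h_\alpha)\bigr)\cdot\hat\Psi_\alpha\, dm + \sum_{j=0}^{k-1} \int \hat P_\alpha^j\,(\partial_\alpha \hat P_\alpha)\,\hat P_\alpha^{k-1-j}(\hat\Psi_\alpha h_\alpha)\cdot\hat\Psi_\alpha\, dm.
\]
The first two integrals are $O(\rho^k)$ by the same spectral gap argument; note $\int \partial_\alpha(\hat\Psi_\alpha h_\alpha)\, dm = 0$ by differentiating $\int \hat\Psi_\alpha h_\alpha\, dm = 0$. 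The critical observation for the sum is that differentiating the identity $\int \hat P_\alpha f\, dm = \int f\, dm$ in $\alpha$ yields $\int (\partial_\alpha \hat P_\alpha) f\, dm = 0$ for every $f$. Consequently $(\partial_\alpha \hat P_\alpha)\hat P_\alpha^{k-1-j}(\hat\Psi_\alpha h_\alpha)$ is mean zero, and two applications of the spectral gap bound each summand by $C\rho^j\cdot\|\partial_\alpha \hat P_\alpha\|_{\mathcal B\to\mathcal B}\cdot\rho^{k-1-j} = C\rho^{k-1}\|\partial_\alpha \hat P_\alpha\|_{\mathcal B\to\mathcal B}$. Summing over $j\in\{0,\dots,k-1\}$ gives $|\partial_\alpha C_k|\leq C(k+1)\rho^k$.

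The main obstacle is operator-theoretic: constructing a single Banach space $\mathcal B$ on which (a) $\hat P_\alpha$ has a uniform spectral gap, (b) $\|\partial_\alpha \hat P_\alpha\|_{\mathcal B\to\mathcal B}$ is uniformly bounded in $\alpha$, and (c) $\hat\Psi_\alpha h_\alpha$ and $\partial_\alpha(\hat\Psi_\alpha h_\alpha)$ lie in $\mathcal B$ with uniform norms. The explicit branch-wise expansion $(\partial_\alpha \hat P_\alpha f)(x) = \sum_n \bigl[(\partial_\alpha G_{\alpha,n})(x) f(F_{\alpha,n}^{-1}x) + G_{\alpha,n}(x) f'(F_{\alpha,n}^{-1}x)\partial_\alpha F_{\alpha,n}^{-1}(x)\bigr]$ reduces (b) to summability of $\sum_n \|\partial_\alpha G_{\alpha,n}\|_\infty$ and $\sum_n \|G_{\alpha,n}\partial_\alpha F_{\alpha,n}^{-1}\|_\infty$, both of which follow directly from (K5)--(K6) of Lemma \ref{lem:k16} combined with the polynomial tails $n^{-(1+1/\alpha)}$ of $\|G_{\alpha,n}\|_\infty$. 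This is essentially the linear-response machinery of \cite{K16} repackaged on a scale of norms compatible with the Gibbs--Markov spectral picture; once this framework is in place the rest is routine summation.
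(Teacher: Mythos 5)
Your outline follows the same route as the paper: pass to the transfer operator of the induced map, use exponential decay of $P_\alpha^k$ on the mean-zero datum $\hat\Psi_\alpha h_\alpha$, and handle the $\alpha$-derivative via the telescoping Leibniz formula $\partial_\alpha[P_\alpha^k]=\sum_i P_\alpha^{k-i}Q_\alpha P_\alpha^{i-1}$, arriving at bounds of the form $k\rho^k$. The paper implements this not with an abstract Gibbs--Markov spectral gap on a single Banach space $\mathcal B$, but with Korepanov's concrete decay estimate $\|P_\alpha^k(\hat\Psi_\alpha h_\alpha)\|_{C^i}\leq C(1-\theta)^k\|P_\alpha(\hat\Psi_\alpha h_\alpha)\|_{C^i}$ ($i=1,2$, \cite[Corollary 4.8]{K16}) together with an explicit branch-by-branch verification (Lemma \ref{lem:reg1}) that a \emph{single} application of $P_\alpha$ already lands in $C^2$ with uniformly bounded norm. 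That packaging is what makes the two points you flag as ``the main obstacle'' actually go through, and as written your proposal does not close them.

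Concretely: (i) $\hat\Psi_\alpha h_\alpha$ does \emph{not} lie in any space $\mathcal B$ whose norm dominates the sup norm, because $\hat\Psi_\alpha\sim\tau_\alpha\hat\psi_\alpha(0)\sim r$ on $I_r$ is genuinely unbounded; Proposition \ref{prop:l2} gives $L^2$ and a weighted Lipschitz bound, not $L^\infty$, so the claim that ``the sup-part of the norm is itself controlled since $\alpha<1$'' is false. The cure is the paper's: apply $P_\alpha$ once first, where the Jacobian factor $\|G_{\alpha,n}\|_\infty\leq Cn^{-(1+1/\alpha)}$ absorbs the linear growth $\|\hat\Psi_\alpha\circ F_{\alpha,n}^{-1}\|_\infty\leq C(n+1)$, and only then invoke decay. (ii) Your bound on the telescoping sum requires $\partial_\alpha \hat P_\alpha$ to be bounded $\mathcal B\to\mathcal B$ on the \emph{same} space carrying the spectral gap, but the branch formula you yourself write down contains $f'(F_{\alpha,n}^{-1}x)$: the operator loses one derivative and cannot be bounded on a single space of Lipschitz-regularity functions. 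The paper resolves this with the graded estimate $\|Q_\alpha g\|_{C^1}\leq C\|g\|_{C^2}$ (\cite[Lemma 4.1]{K16}), using the $C^2$ decay before $Q_\alpha$ and the $C^1$ decay after it --- possible precisely because each term of the Leibniz sum contains $Q_\alpha$ only once. Your closing remark about ``a scale of norms'' points in the right direction, but the estimate $\|\partial_\alpha\hat P_\alpha\|_{\mathcal B\to\mathcal B}\leq C$ as stated is not attainable and the argument needs to be rewritten in the two-norm form to be correct.
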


\begin{proof}
We first show that 
\[
\sum_{k \geq 1} \int \hat \Psi_{\alpha} \cdot \hat \Psi_{\alpha} \circ F_{\alpha}^k \cdot h_{\alpha} \: dm=\sum_{k \geq 1} \int P_{\alpha}^k(\hat \Psi_{\alpha} h_{\alpha}) \cdot \hat \Psi_{\alpha} \: dm
\]	
converges uniformly. Our first observation is the following:
\begin{lemma} \label{lem:reg1}
\[
x \mapsto P_{\alpha}^{k}(\hat \Psi_{\alpha} h_{\alpha})(x) \in C^2
\]
\end{lemma}
\begin{proof}
Since $P_{\alpha}: C^2 \to C^2$, it is enough to prove the lemma for $k=1$.
\[
P_{\alpha}(\hat \Psi_{\alpha} h_{\alpha})=\sum_{n=1}^{\infty} (h_{\alpha} \circ F_{\alpha, n}^{-1}) (\hat \Psi_{\alpha} \circ F_{\alpha, n}^{-1})   G_{\alpha, n}
\]
where $h_{\alpha} \circ F_{\alpha, n}^{-1}$, $\hat \Psi_{\alpha} \circ F_{\alpha, n}^{-1}$ and $G_{\alpha, n}$ are continuously differentiable in $x$, and we have seen previously that $\|h_{\alpha}\|_{\infty} \leq K$, $\|\hat \Psi_{\alpha} \circ F_{\alpha, n}^{-1}\|_{\infty} \leq C(n+1)$ and $\| G_{\alpha, n}\|_{\infty} \leq C n^{-\left( 1+\frac{1}{\alpha}\right)}$. Thus 
\begin{align}
&\sum_{n=1}^{\infty} |(h_{\alpha} \circ F_{\alpha, n}^{-1}) (\hat \Psi_{\alpha} \circ F_{\alpha, n}^{-1})   G_{\alpha, n}| \leq C\sum_{n=1}^{\infty}(n+1) n^{-\left( 1+\frac{1}{\alpha}\right)} < \infty \label{eq:unifcont}
\end{align}
implying that the sum converges uniformly and $x \mapsto P_{\alpha}(\hat \Psi_{\alpha} h_{\alpha})(x)$ is continuous. 

We now study
\[
\sum_{n=1}^{\infty} \partial_x[(h_{\alpha} \circ F_{\alpha, n}^{-1}) (\hat \Psi_{\alpha} \circ F_{\alpha, n}^{-1})   G_{\alpha, n}]
\]
According to \cite[Theorem 2.1]{K16} we in fact have $\|h_{\alpha}\|_{C^2} \leq K$ and
\begin{align*}
(\hat \Psi_{\alpha} \circ F_{\alpha, n}^{-1})'(z)&=\left(\hat \psi_{\alpha}\left(\frac{z_{n}+1}{2} \right)+\sum_{j=0}^{n-1} \hat \psi_{\alpha}(f_{\alpha}^j(z_{n}))\right)'\\
&=\psi'\left(\frac{z_{n}+1}{2} \right)z_{n}'+\sum_{j=1}^{n} \psi'(z_{j})z_j'
\end{align*}
thus $\|(\hat \Psi_{\alpha} \circ F_{\alpha, n}^{-1})'\|_{\infty} \leq C\|\psi\|_{C^1}(n+1)$ (using that $z_j' \leq 1$ by (K0) of Lemma \ref{lem:k16}). 

Finally, $\|G_{\alpha_{n}}'/G_{\alpha,n}\|_{\infty} \leq C$ by (K2) from Lemma \ref{lem:k16}. Thus
\begin{align*}
&\sum_{n=1}^{\infty} |\partial_x[(h_{\alpha} \circ F_{\alpha, n}^{-1}) (\hat \Psi_{\alpha} \circ F_{\alpha, n}^{-1})   G_{\alpha, n}]| \leq  C\sum_{n=1}^{\infty} n^{-\left(1+\frac{1}{\alpha}\right)}(n+1) < \infty
\end{align*}
implying that $x \mapsto (P_{\alpha}(\hat \Psi_{\alpha} h_{\alpha}))'(x)$ is continuous. We do similar calculations to prove that $x \mapsto (P_{\alpha}(\hat \Psi_{\alpha} h_{\alpha}))''(x)$ is continuous. We compute
\[
\sum_{n=1}^{\infty} \partial^2_x[(h_{\alpha} \circ F_{\alpha, n}^{-1}) (\hat \Psi_{\alpha} \circ F_{\alpha, n}^{-1})   G_{\alpha, n}]
\]
and for fixed $n$ we get nine terms that are continuous individually. We can easily compute that $\|(h_{\alpha} \circ F_{\alpha, n}^{-1})''\|_{\infty} \leq C$, $\|(\hat \Psi_{\alpha} \circ F_{\alpha, n}^{-1})''\|_{\infty} \leq C\|\psi\|_{C^2}(n+1)$ (using that $|z_r''| \leq C$ by (K2) from Lemma \ref{lem:k16}), and $\|G_{\alpha,n}'' / G_{\alpha,n}\|_{\infty} \leq C$ by (K3) from Lemma \ref{lem:k16}. By using bounds computed previously, we obtain that
\begin{align*}
&\sum_{n=1}^{\infty} |\partial^2_x[(h_{\alpha} \circ F_{\alpha, n}^{-1}) (\hat \Psi_{\alpha} \circ F_{\alpha,n}^{-1})   G_{\alpha, n}]| \leq C\sum_{n=1}^{\infty}(n+1)n^{-\left(1+\frac{1}{\alpha}\right)} < \infty,
\end{align*}
implying that the sum converges uniformly.
\end{proof}

By \cite[Corollary 4.8]{K16}
\begin{equation} \label{eq:edc}
\|P_{\alpha}^{k}(\hat \Psi_{\alpha} h_{\alpha})\|_{C^i} \leq C(1-\theta)^k\|P_{\alpha}(\hat \Psi_{\alpha} h_{\alpha})\|_{C^i} \qquad i=1,2
\end{equation}
for some $\theta \in (0,1)$ and $C > 0$. We obtain $\|P_{\alpha}(\hat \Psi_{\alpha} h_{\alpha})\|_{C^1} \leq C$ by the computation of Lemma \ref{lem:reg1} and $\|\hat \Psi_{\alpha}\|_1 \leq C$ by a computation very similar to that in Proposition \ref{prop:l2}. 
Hence 
\begin{align*}
\sum_{k \geq 2} \left| \int P_{\alpha}^k(\hat \Psi_{\alpha} h_{\alpha}) \cdot \hat \Psi_{\alpha} \: dm \right | &\leq C\|P_{\alpha}(\hat \Psi_{\alpha}  h_{\alpha})\|_{C^1}\|\hat \Psi_{\alpha}\|_{L^1}\sum_{k \geq 2} (1-\theta)^k \\
&\leq C\sum_{k \geq 2} (1-\theta)^k,
\end{align*}
which proves the first statement of the proposition.

We now study
\begin{align*}
\sum_{k \geq 1} \partial_{\alpha}\left(\int \hat \Psi_{\alpha} \cdot \hat \Psi_{\alpha} \circ F_{\alpha}^k \cdot h_{\alpha}\: dm\right)= \sum_{k \geq 1} \int \partial_{\alpha}\left(P_{\alpha}^k(\hat \Psi_{\alpha} h_{\alpha}) \hat \Psi_{\alpha} \: dm \right).
\end{align*}
Write 
\begin{align*}
\int \partial_{\alpha}\left(P_{\alpha}^k(\hat \Psi_{\alpha} h_{\alpha}) \hat \Psi_{\alpha} \: dm \right) &= \int P_{\alpha}^k(\hat \Psi_{\alpha} h_{\alpha}) \partial_{\alpha}\hat \Psi_{\alpha} \: dm + \int \partial_{\alpha}[P_{\alpha}^k(\hat \Psi_{\alpha} h_{\alpha})] \hat \Psi_{\alpha} \: dm \\
&=(I) + (II)
\end{align*}
First,
\[
(I) = \int P_{\alpha}(P_{\alpha}^k(\hat \Psi_{\alpha} h_{\alpha}) \partial_{\alpha}\hat \Psi_{\alpha}
) \: dm =: \int P_{\alpha}(p_{\alpha}\partial_{\alpha}\hat \Psi_{\alpha}
) \: dm,
\]
where by Equation \eqref{eq:edc} we have $\|p_{\alpha}\|_{C^1} \leq C(1-\theta)^k$. Write 
\[
\int P_{\alpha}(p_{\alpha}\partial_{\alpha}\hat \Psi_{\alpha}
) \: dm = \int \sum_{n \geq 1} p_{\alpha}\circ F_{\alpha,n}^{-1 }\cdot \partial_{\alpha} \hat \Psi_{\alpha} \circ F_{\alpha,n}^{-1} \cdot G_{\alpha,n}
 \: dm
\]
Now 
\begin{align*}
\partial_{\alpha} \hat \Psi_{\alpha} \circ F_{\alpha,n}^{-1}=\partial_{\alpha} (\hat \Psi_{\alpha} \circ F_{\alpha,n}^{-1})-\hat \Psi_{\alpha}' \circ F_{\alpha,n}^{-1} \cdot  \partial_{\alpha} F_{\alpha,n}^{-1} \\
=\partial_{\alpha} (\hat \Psi_{\alpha} \circ F_{\alpha,n}^{-1})-(\hat \Psi_{\alpha} \circ F_{\alpha,n}^{-1})' (G_{\alpha,n})^{-1}\cdot  \partial_{\alpha} F_{\alpha,n}^{-1}
\end{align*}
thus
\[
\|(\partial_{\alpha} \hat \Psi_{\alpha} \circ F_{\alpha,n}^{-1})G_{\alpha,n}\|_{\infty} \leq C(n+1)n^{-\frac{1}{\alpha}}(\logg n)^2 
\]
by \eqref{eq:secondalpha} and (K6) from Lemma \ref{lem:k16}, thus
\[
 \sum_{n \geq 1} |p_{\alpha}\circ F_{\alpha,n}^{-1 }\cdot \partial_{\alpha} \hat \Psi_{\alpha} \circ F_{\alpha,n}^{-1} \cdot G_{\alpha,n}| \leq C(1-\theta)^k\sum_{n \geq 1} C(n+1)(\logg n)^2 n^{-\frac{1}{\alpha}} < \infty 
\]
for $\alpha < 1/2$, and
\begin{equation} \label{eq:firstcorr}
(I) \leq C_1(1-\theta)^k.
\end{equation}
Next,
\begin{align*}
(II)&= \int P_{\alpha}^k(\partial_{\alpha}[\hat \Psi_{\alpha} h_{\alpha}]) \hat \Psi_{\alpha} \: dm + \int \partial_{\alpha}[P_{\alpha}^k](\hat \Psi_{\alpha} h_{\alpha})\hat \Psi_{\alpha} \: dm \\
&=(IIa) + (IIb)
\end{align*}
With $(IIa)$ we can do the same argument as with $\int P_{\alpha}^k(\hat \Psi_{\alpha} h_{\alpha}) \hat \Psi_{\alpha} \: dm$, provided that $x \mapsto P_{\alpha}^k(\partial_{\alpha}\hat \Psi_{\alpha} h_{\alpha}+\hat \Psi_{\alpha} \partial_{\alpha} h_{\alpha})(x) \in C^1$. 

\begin{lemma}
\[
x \mapsto P_{\alpha}^k(\partial_{\alpha}\hat \Psi_{\alpha} h_{\alpha}+\hat \Psi_{\alpha} \partial_{\alpha} h_{\alpha})(x) \in C^1
\]
\end{lemma}

\begin{proof}
It is enough again to prove the statement for $k=1$. We write
\begin{align*}
&P_{\alpha}(\partial_{\alpha}[\hat \Psi_{\alpha} h_{\alpha}])\\
&=\sum_{n=1}^{\infty} (\partial_{\alpha}h_{\alpha} \circ F_{\alpha, n}^{-1}) (\hat \Psi_{\alpha} \circ F_{\alpha, n}^{-1})   G_{\alpha, n}+(h_{\alpha} \circ F_{\alpha, n}^{-1}) (\partial_{\alpha}\hat \Psi_{\alpha} \circ F_{\alpha, n}^{-1})   G_{\alpha, n}
\end{align*}
All terms are continuous in $x$, and the sum uniformly converges since
\begin{align*}
&\sum_{n=1}^{\infty} |(\partial_{\alpha}h_{\alpha} \circ F_{\alpha, n}^{-1}) (\hat \Psi_{\alpha} \circ F_{\alpha, n}^{-1})   G_{\alpha, n}+(h_{\alpha} \circ F_{\alpha, n}^{-1}) (\partial_{\alpha}\hat \Psi_{\alpha} \circ F_{\alpha, n}^{-1})   G_{\alpha, n}| \\
& \leq \sum_{n=1}^{\infty} (n+1)(\logg n)^2 n^{-\frac{1}{\alpha}} < \infty, 
\end{align*}

As for the continuity of $x \mapsto (P^k_{\alpha}(\partial_{\alpha}\hat \Psi_{\alpha} h_{\alpha}+\hat \Psi_{\alpha} \partial_{\alpha} h_{\alpha}))'(x)$, we write
\begin{align}
&(P_{\alpha}^k(\partial_{\alpha}[\hat \Psi_{\alpha} h_{\alpha}]))' \nonumber \\
&=\sum_{n=1}^{\infty}[((\partial_{\alpha}h_{\alpha})' \circ F_{\alpha, n}^{-1}) (\hat \Psi_{\alpha} \circ F_{\alpha, n}^{-1}) +(h_{\alpha}' \circ F_{\alpha, n}^{-1}) (\partial_{\alpha}\hat \Psi_{\alpha} \circ F_{\alpha, n}^{-1})]G_{\alpha, n}^2 \nonumber \\
&+[(\partial_{\alpha}h_{\alpha} \circ F_{\alpha, n}^{-1}) (\hat \Psi_{\alpha} \circ F_{\alpha, n}^{-1})' +(h_{\alpha} \circ F_{\alpha, n}^{-1}) (\partial_{\alpha}\hat \Psi_{\alpha} \circ F_{\alpha, n}^{-1})'] G_{\alpha, n} \nonumber \\
&+ [((\partial_{\alpha}h_{\alpha}) \circ F_{\alpha, n}^{-1}) (\hat \Psi_{\alpha} \circ F_{\alpha, n}^{-1}) +(h_{\alpha}\circ F_{\alpha, n}^{-1}) (\partial_{\alpha}\hat \Psi_{\alpha} \circ F_{\alpha, n}^{-1})]G'_{\alpha, n} \label{eq:dersum}
\end{align}
We compute that
\begin{align*}
(\partial_{\alpha}[\hat \Psi_{\alpha}  \circ F_{\alpha, n}^{-1}])'&=\psi' \left(\frac{z_{n}+1}{2} \right)z_{n}'\partial_{\alpha}z_{n} + \psi \left(\frac{z_{n}+1}{2} \right)\partial_{\alpha}z_{n}' \\
&+\sum_{i=1}^{n-1} \psi' \left(z_{i} \right)z_{i}'\partial_{\alpha}z_i + \psi \left(z_i \right)\partial_{\alpha}z_i',
\end{align*}
and since $|z_i'|, |\partial_{\alpha}z_i'| \leq C$ and $|\partial_{\alpha}z_i'| \leq C (\logg i)^3$ we have $\|(\partial_{\alpha}[\hat \Psi_{\alpha}  \circ F_{\alpha, n}^{-1}])'\|_{\infty} \leq C (n+1)(\logg n)^3$ and thus
\begin{align*}
&\|(\partial_{\alpha}\hat \Psi_{\alpha} \circ F_{\alpha, n}^{-1})'G_{\alpha,n}\|_{\infty} \leq \|(\partial_{\alpha}[\hat \Psi_{\alpha}  \circ F_{\alpha, n}^{-1}])'\|_{\infty}\|G_{\alpha,n}\|_{\infty} \\
&+ \|(\hat \Psi_{\alpha}'  \circ F_{\alpha, n}^{-1})'G_{\alpha,n}\|_{\infty}\|\partial_{\alpha}F_{\alpha, n}^{-1}\|_{\infty}+ \|(\hat \Psi_{\alpha}'  \circ F_{\alpha, n}^{-1}) G_{\alpha,n}\|_{\infty}\|\partial_{\alpha}(F_{\alpha, n}^{-1})'\|_{\infty} \\
& \leq C(n+1) \left[(\logg n)^3n^{-\left(1+\frac{1}{\alpha}\right)}+(\logg n)^2n^{-\frac{1}{\alpha}}+ (\logg n)^5n^{-\left(1+\frac{1}{\alpha}\right)} \right]
\end{align*}
thus
\[
\eqref{eq:dersum} \leq C\sum_{n=1}^{\infty}n^{-\left(1+\frac{1}{\alpha} \right)} (n+1)(\logg n)^5n^{-\frac{1}{\alpha}}< \infty.
\]
\end{proof}
Returning to $(IIa)$, we write
\[
(IIa) = \int P_{\alpha}(P_{\alpha}^k(\partial_{\alpha}[\hat \Psi_{\alpha} h_{\alpha}]) \hat \Psi_{\alpha}
) \: dm =: \int P_{\alpha}(q_{\alpha}\hat \Psi_{\alpha}
) \: dm,
\]
and we have 
\begin{align*}
\left| \int P_{\alpha}(q_{\alpha}\hat \Psi_{\alpha})dm\right| &\leq  \sum_{n \geq 1} |q_{\alpha}\circ F_{\alpha,n}^{-1 }\cdot  \hat \Psi_{\alpha} \circ F_{\alpha,n}^{-1} \cdot G_{\alpha,n}| \\
&\leq C(1-\theta)^k\sum_{n \geq 1} C(n+1)n^{-\left(1+\frac{1}{\alpha}\right)} 
\end{align*}
giving
\begin{equation} \label{eq:secondcorr}
(IIa) \leq C_{2}(1-\theta)^k.
\end{equation}
Finally, we study $(IIb)$. Write
\[
\partial_{\alpha}[P_{\alpha}^k] = \lim_{\alpha \to \alpha'}\frac{1}{\alpha-\alpha'}\sum_{i=1}^k P_{\alpha}^{k-i}(P_{\alpha}-P_{\alpha'})P_{\alpha'}^{i-1}=:\sum_{i=1}^k P_{\alpha}^{k-i}Q_{\alpha}P_{\alpha}^{i-1},
\]
using similar notation to \cite{K16}.
Hence 
\begin{align*}
\int \partial_{\alpha}[P_{\alpha}^k](\hat \Psi_{\alpha} h_{\alpha}) \hat \Psi_{\alpha} \: dm=\int \sum_{i=1}^k P_{\alpha}^{k-i}Q_{\alpha}P_{\alpha}^{i-1}(\hat \Psi_{\alpha} h_{\alpha}) \hat \Psi_{\alpha} \: dm
\end{align*}
We first look at the $i \geq 2$ terms of the sum. Then
\begin{align*}
\sum_{i=2}^k |P_{\alpha}^{k-i}Q_{\alpha}P_{\alpha}^{i-1}(\hat \Psi_{\alpha} h_{\alpha}) \hat \Psi_{\alpha}| &\leq Ck(1-\theta)^{k-i}\|Q_{\alpha}P_{\alpha}^{i-1}(\hat \Psi_{\alpha} h_{\alpha})\|_{C^1}|\hat \Psi_{\alpha}| \\
&\leq  Ck(1-\theta)^{k-i}\|P_{\alpha}^{i-1}(\hat \Psi_{\alpha} h_{\alpha})\|_{C^2}|\hat \Psi_{\alpha}| \\
&\leq  Ck(1-\theta)^{k-2}\|P_{\alpha}(\hat \Psi_{\alpha} h_{\alpha})\|_{C^2}|\hat \Psi_{\alpha}|
\end{align*}
using \cite[Lemma 4.1]{K16} in the second step, and \eqref{eq:edc} in the first and third step. We obtain $\|P_{\alpha}(\hat \Psi_{\alpha} h_{\alpha})\|_{C^2} \leq C$ by the computation of Lemma \ref{lem:reg1} and $\|\hat \Psi_{\alpha}\|_1 \leq C$ by a computation very similar to that in Proposition \ref{prop:l2}. Thus
\[
\left| \int \sum_{i=2}^k P_{\alpha}^{k-i}Q_{\alpha}P_{\alpha}^{i-1}(\hat \Psi_{\alpha} h_{\alpha}) \hat \Psi_{\alpha} \: dm \right| \leq Ck(1-\theta)^{k-2}.
\]
As for the $i=1$ term, we show that $Q_{\alpha}(\hat \Psi_{\alpha}h_{\alpha}) \in C^1$. Indeed,
\begin{align*}
Q_{\alpha}(\hat \Psi_{\alpha}h_{\alpha}) = \sum_{n \geq 1} &\partial_{\alpha}[\hat \Psi_{\alpha} \circ F_{\alpha,n}^{-1} \cdot h_{\alpha} \circ F_{\alpha,n}^{-1} \cdot G_{\alpha,n}] \\
=\sum_{n \geq 1} &\partial_{\alpha}[\hat \Psi_{\alpha} \circ F_{\alpha,n}^{-1}] \cdot h_{\alpha} \circ F_{\alpha,n}^{-1} \cdot G_{\alpha,n} \\
&+\hat \Psi_{\alpha} \circ F_{\alpha,n}^{-1} \cdot \partial_{\alpha}[h_{\alpha} \circ F_{\alpha,n}^{-1}] \cdot G_{\alpha,n} \\
&+ \hat \Psi_{\alpha} \circ F_{\alpha,n}^{-1} \cdot h_{\alpha} \circ F_{\alpha,n}^{-1} \cdot \partial_{\alpha}G_{\alpha,n}
\end{align*}
Each term is continuous in $x$, and by previously computed bounds we can upper bound the sum by
\[
C\sum_{n \geq 1} (n+1)(1+(\logg n)^2+(\logg n)^3)n^{-\left(1+\frac{1}{\alpha}\right)} < \infty,
\]
hence it converges uniformly. By similar argument we also get that $(Q_{\alpha}(\hat \Psi_{\alpha}h_{\alpha}))'$ is continuous: we compute 
\[
Q_{\alpha}(\hat \Psi_{\alpha}h_{\alpha}) = \sum_{n \geq 1} \partial_x\partial_{\alpha}[\hat \Psi_{\alpha} \circ F_{\alpha,n}^{-1} \cdot h_{\alpha} \circ F_{\alpha,n}^{-1} \cdot G_{\alpha,n}],
\]
and we see that for fixed $n$ each of the nine terms are continuous in $x$. By using previously computed bounds, we again upper bound the sum by  
\[
C\sum_{n \geq 1} (n+1)(\logg n)^3n^{-\left(1+\frac{1}{\alpha}\right)} < \infty,
\]
proving that it converges uniformly. 

Then by \eqref{eq:edc} we have
\[
\|P_{\alpha}^{k-1}Q_{\alpha}(\hat \Psi_{\alpha}h_{\alpha})\|_{C^1} \leq C(1-\theta)^{k-1},
\]
thus
\begin{equation} \label{eq:thirdcorr}
(IIb) \leq  \begin{cases}
C_3 &\text{ if } k=1 \\
C_3k(1-\theta)^{k-2} &\text{ if } k \geq 2
\end{cases}
\end{equation}
We can conclude the proof of the proposition by combining \eqref{eq:firstcorr}, \eqref{eq:secondcorr} and  \eqref{eq:thirdcorr}
to obtain
\[
\sum_{k \geq 1} \int \partial_{\alpha}\left(P_{\alpha}^k(\hat \Psi_{\alpha} h_{\alpha}) \hat \Psi_{\alpha} \: dm \right) \leq C+C\sum_{k \geq 2} k(1-\theta)^{k-2} < \infty. 
\]
\end{proof}

\section{Concluding remarks}

A number of further questions would be interesting to study in the future. The most straightforward one would be considering an observable for which $\psi(0)=0$ and $\alpha \in (1/2,1)$. In this case the central limit theorem holds \cite{G04} and according to a 2002 announcement by Hu, correlations are summable -- making the Green--Kubo formula well-defined. However, a proof of this statement is not possible to track down in the literature. Provided that this in fact holds, we can expect that $\alpha \mapsto \sigma^2(\alpha) \in C^1[\alpha_-,\alpha_+]$ for any $1/2 < \alpha_- < \alpha_+< 1$. However, the calculations in these notes made use of $\alpha < 1/2$ in several places so the generalization is not completely straightforward. 

For a general observable $\psi$, we have $(S_n \psi)/n^{\alpha}$ converging in distribution to a random variable with a stable law of index $1/\alpha$ \cite{G04}. In this case the first task would be to give a proper definition for the diffusion coefficient. Venturing to the regime of $\alpha \geq 1$, $f_{\alpha}$ preserves a $\sigma$-finite measure, and the corresponding anomalous diffusion calls for the definition of a generalized diffusion coefficient, possibly along the lines of \cite{KK07}. It would be an intriguing task to check rigorously the discontinuities and fractal properties of the diffusion coefficient uncovered by numerics in \cite{KK07}.

Returning to the setting of the current paper, another interesting question would be to study further regularity of the drift- and diffusion coefficient as a function of $\alpha$. $C^2$-smoothness of the drift coefficient is sometimes called \emph{quadratic response} in the literature and has essentially only been studied in the uniformly expanding setting \cite{GS20}. The first task would be to clear quadratic response for the LSV map, then one could move on to study higher order regularity of the diffusion coefficient.

\begin{figure}
	\centering
	\begin{tikzpicture}
\draw[scale=2] (0,0) -- (2,0);
	\draw[scale=2] (0,0) -- (0,2);
	\draw[scale=2] (2,0) -- (2,2) -- (0,2);
	\draw[scale=2,dotted] (0,1.4) -- (2,1.4);
	\foreach \x/\xtext in {0/0,4/1}
	\draw[shift={(\x,0)},scale=2] (0pt,2pt) -- (0pt,-2pt) node[below] {$\xtext$};
	\foreach \y/\ytext in {2.8/H}
	\draw[shift={(0,\y)},scale=2] (2pt,0pt) -- (-2pt,0pt) node[left] {$\ytext$};
	\draw[domain=0:1, smooth, variable=\x,thick,scale=2] plot ({\x}, {2*(\x/2+2^0.9*(\x/2)^1.9)});
	\draw[thick,scale=2] (1,0) -- (2,1.4);
	\end{tikzpicture}
	\caption{LSV-type map with a non-full branch} \label{Fig:LSV2}
\end{figure}
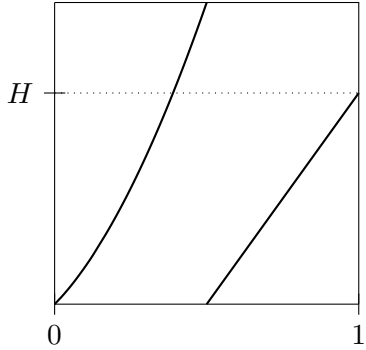

Another possible direction of generalization would be to consider LSV-type maps with a non-full branch, such as the one displayed on Figure \ref{Fig:LSV2}. In this setting one could study the regularity of the induced map's diffusion coefficient by the methods developed in \cite{KKH08} to obtain a log-Lipschitz modulus of continuity, namely
\[
|\tilde{\sigma}^2(\alpha)-\tilde{\sigma}^2(\alpha')| \leq C|\alpha-\alpha'|(1+|\log|\alpha-\alpha'||)^2, \quad \alpha,\alpha' \in [\alpha_-,\alpha_+]
\]
which would be inherited by $\sigma^2(\alpha)$, provided that $\alpha \mapsto \int \tau_{\alpha}d\mu_{\alpha}$ has the same type of (or better) regularity. For this, the regularity of  $\alpha \mapsto \tau_{\alpha}$ and $\alpha \mapsto h_{\alpha}$ has to be studied. According to \cite{K82}, $|h_{\alpha}-h_{\alpha'}|_{L^1} \leq C|\alpha-\alpha'||\log|\alpha-\alpha'||$ (the infinite number of branches does not cause a difficulty in the proof), but the regularity of  $\alpha \mapsto \tau_{\alpha}$ remains to be studied.

It should be possible to study the diffusion coefficient as a function of both the parameter $\alpha$ and the height $H$ of the second branch, and obtain 
\[
|\tilde{\sigma}^2(\alpha,H)-\tilde{\sigma}^2(\alpha',H')| \leq C(|\alpha-\alpha'|+|H-H'|)(1+|\log(|\alpha-\alpha'|+|H-H'|)|)^2
\]
for all $(\alpha,H), (\alpha', H') \in [\alpha_-,\alpha_+] \times [H_-,H_+]$ where $0 < \alpha_- < \alpha_+ < 1/2$ and  $1/2 < H_- < H_+ < 1$ (so that the second branch is expanding.)

\section*{Statements and declarations}
\subsection*{Funding and competing interests} The author has no relevant financial or non-financial interests to disclose.
\subsection*{Data availability statement} Not applicable.


\begin{thebibliography}{99}
\bibitem{A20} Alves, J. F. (2020). Nonuniformly hyperbolic attractors. Springer International Publishing.
\bibitem{A97} Aaronson, J. (1997). An introduction to infinite ergodic theory (No. 50). American Mathematical Soc..
\bibitem{BCV16} Bomfim, T., Castro, A., \& Varandas, P. (2016). Differentiability of thermodynamical quantities in non-uniformly expanding dynamics. Advances in Mathematics, 292, 478--528.
\bibitem{BT16} Baladi, V., \& Todd, M. (2016). Linear response for intermittent maps. Communications in Mathematical Physics, 347(3), 857--874.
\bibitem{D14} Thomine, D. (2014). A generalized central limit theorem in infinite ergodic theory. Probability Theory and Related Fields, 158(3-4), 597--636.
\bibitem{EMV21} Eslami, P., Melbourne, I., \& Vaienti, S. (2021). Sharp statistical properties for a family of multidimensional nonMarkovian nonconformal intermittent maps. Advances in Mathematics, 388, 107853.
\bibitem{FG82} Fujisaka, H., \& Grossmann, S. (1982). Chaos-induced diffusion in nonlinear discrete dynamics. Zeitschrift für Physik B Condensed Matter, 48(3), 261--275.
\bibitem{G04} Gouëzel, S. (2004). Central limit theorem and stable laws for intermittent maps. Probability Theory and Related Fields, 128(1), 82--122.
\bibitem{G15} Gouëzel, S. (2015). Limit theorems in dynamical systems using the spectral method. In: Hyperbolic Dynamics,
Fluctuations and Large Deviations, Volume 89 of Proceedings Symposium in Pure Mathematics, pp.
161--193. Amer. Math. Soc., Providence.
\bibitem{GS20} Galatolo, S., \& Sedro, J. (2020). Quadratic response of random and deterministic dynamical systems. Chaos: An Interdisciplinary Journal of Nonlinear Science, 30(2), 023113.
\bibitem{KKH08} Keller, G., Howard, P. J., \& Klages, R. (2008). Continuity properties of transport coefficients in simple maps. Nonlinearity, 21(8), 1719.
\bibitem{K82} Keller, G. (1982). Stochastic stability in some chaotic dynamical systems. Monatshefte für Mathematik, 94(4), 313--333.
\bibitem{K07} Klages, R. (2007). Microscopic chaos, fractals and transport in nonequilibrium statistical mechanics (Vol. 24). World Scientific.
\bibitem{KD95} Klages, R., \& Dorfman, J. R. (1995). Simple maps with fractal diffusion coefficients. Physical review letters, 74(3), 387.
\bibitem{KK03} Klages, R., \& Klauß, T. (2003). Fractal fractal dimensions of deterministic transport coefficients. Journal of Physics A: Mathematical and General, 36(21), 5747.
\bibitem{KK07} Korabel, N., Klages, R., Chechkin, A. V., Sokolov, I. M., \& Gonchar, V. Y. (2007). Fractal properties of anomalous diffusion in intermittent maps. Physical Review E, 75(3), 036213.
\bibitem{K16} Korepanov, A. (2016). Linear response for intermittent maps with summable and nonsummable decay of correlations. Nonlinearity, 29(6), 1735.
\bibitem{LSV99} Liverani, C., Saussol, B., \& Vaienti, S. (1999). A probabilistic approach to intermittency. Ergodic theory and dynamical systems, 19(3), 671--685.
\bibitem{P80} Pianigiani, G. (1980). First return map and invariant measures. Israel Journal of Mathematics, 35(1), 32--48.
\bibitem{Y99} Young, L. S. (1999). Recurrence times and rates of mixing. Israel Journal of Mathematics, 110(1), 153--188.
\end{thebibliography}
\end{document}